\definecolor{darkgreen}{rgb}{0,0.45,0}
\newcommand{\calO}{\ensuremath{\mathcal{O}}}
\newcommand{\calP}{\ensuremath{\mathcal{P}}}
\newcommand{\bbZ}{\ensuremath{\mathbb{Z}}}
\newcommand{\bC}{\ensuremath{\mathbf{C}}}
\newcommand{\bD}{\ensuremath{\mathbf{D}}}
\newcommand{\bS}{\ensuremath{\mathbf{S}}}
\newcommand{\fc}{\ensuremath{\mathfrak{c}}}
\newcommand{\fs}{\ensuremath{\mathfrak{s}}}
\newcommand{\ten}{\ensuremath{\otimes}}
\newcommand{\inv}{^{-1}}
\newcommand{\id}{\ensuremath{\operatorname{id}}}
\newcommand{\Hom}{\ensuremath{\operatorname{Hom}}}
\newcommand{\Ho}{\ensuremath{\operatorname{Ho}}}
\newcommand{\colim}{\ensuremath{\operatorname{colim}}}
\newcommand{\iso}{\cong}
\newcommand{\too}[1][]{\ensuremath{\overset{#1}{\longrightarrow}}}
\newcommand{\toto}{\ensuremath{\rightrightarrows}}
\newcommand{\maps}{\colon}
\let\xto\xrightarrow
\def\xiso#1{\mathrel{\mathrlap{\smash{\xto[\smash{\raisebox{1.3mm}{$\scriptstyle\sim$}}]{#1}}}\hphantom{\xto{#1}}}}
\def\slashedarrowfill@#1#2#3#4#5{%
  $\m@th\thickmuskip0mu\medmuskip\thickmuskip\thinmuskip\thickmuskip
   \relax#5#1\mkern-7mu%
   \cleaders\hbox{$#5\mkern-2mu#2\mkern-2mu$}\hfill
   \mathclap{#3}\mathclap{#2}%
   \cleaders\hbox{$#5\mkern-2mu#2\mkern-2mu$}\hfill
   \mkern-7mu#4$%
}
\def\rightslashedarrowfill@{%
  \slashedarrowfill@\relbar\relbar\mapstochar\rightarrow}
\newcommand\xslashedrightarrow[2][]{%
  \ext@arrow 0055{\rightslashedarrowfill@}{#1}{#2}}
\def\hto{\xslashedrightarrow{}}
\newtheorem{thm}{Theorem}[section]
\newtheorem{cor}{Corollary}
\let\c@cor\c@thm\makeatother
  \numberwithin{cor}{section}
\newtheorem{prop}{Proposition}
\let\c@prop\c@thm\makeatother
  \numberwithin{prop}{section}
\newtheorem{lem}{Lemma}
\let\c@lem\c@thm\makeatother
  \numberwithin{lem}{section}
\theoremstyle{definition}
\newtheorem{defn}{Definition}
\let\c@defn\c@thm\makeatother
  \numberwithin{defn}{section}
\let\c@notn\c@thm\makeatother
  \numberwithin{notn}{section}
\theoremstyle{remark}
\newtheorem{rmk}{Remark}
\let\c@rmk\c@thm\makeatother
  \numberwithin{rmk}{section}
\newtheorem{eg}{Example}
\let\c@eg\c@thm\makeatother
  \numberwithin{eg}{section}
\let\c@egs\c@thm\makeatother
  \numberwithin{egs}{section}
\let\c@equation\c@thm
\numberwithin{equation}{section}
\def\alwaysmath#1{\expandafter\expandafter\expandafter\global\expandafter\expandafter\expandafter\let\expandafter\expandafter\csname your@#1\endcsname\csname #1\endcsname
  \expandafter\def\csname #1\endcsname{\ensuremath{\csname your@#1\endcsname}}}
\newcommand{\ep}{\ensuremath{\varepsilon}}
\let\al\alpha
\DeclareRobustCommand\widecheck[1]{{\mathpalette\@widecheck{#1}}}
\def\@widecheck#1#2{%
    \setbox\z@\hbox{\m@th$#1#2$}%
    \setbox\tw@\hbox{\m@th$#1%
       \widehat{%
          \vrule\@width\z@\@height\ht\z@
          \vrule\@height\z@\@width\wd\z@}$}%
    \dp\tw@-\ht\z@
    \@tempdima\ht\z@ \advance\@tempdima2\ht\tw@ \divide\@tempdima\thr@@
    \setbox\tw@\hbox{%
       \raise\@tempdima\hbox{\scalebox{1}[-1]{\lower\@tempdima\box
\tw@}}}%
    {\ooalign{\box\tw@ \cr \box\z@}}}
\newcommand{\rdual}[1]{{{#1}^{\bigstar}}}
\newcommand{\tr}{\ensuremath{\operatorname{tr}}}
\newcommand{\fii}{\ensuremath{\mathfrak{i}}}
\newcommand{\Sip}{\ensuremath{\Sigma^\infty_+}}
\newcommand{\Set}{\ensuremath{\mathbf{Set}}}
\newcommand{\Ab}{\ensuremath{\mathbf{Ab}}}
\newcommand{\Sup}{\ensuremath{\mathbf{Sup}}}
\newcommand{\bRel}{\ensuremath{\mathbf{Rel}}}
\newcommand{\Top}{\ensuremath{\mathbf{Top}}}
\newcommand{\Sp}{\ensuremath{\mathbf{Sp}}}
\newcommand{\Vect}[1]{\ensuremath{\mathbf{Vect}_{#1}}}
\newcommand{\bMod}[1]{\ensuremath{\mathbf{Mod}_{#1}}}
\newcommand{\GrVect}[1]{\ensuremath{\mathbf{GrVect}_{#1}}}
\newcommand{\bGrMod}[1]{\ensuremath{\mathbf{GrMod}_{#1}}}
\newcommand{\bCh}[1]{\ensuremath{\mathbf{Ch}_{#1}}}
\newcommand{\gTop}[1]{\ensuremath{{#1}\text{-}\mathbf{Top}}}
\newcommand{\gSp}[1]{\ensuremath{{#1}\text{-}\mathbf{Sp}}}
\newcommand{\bEx}[1]{\ensuremath{\mathbf{Sp}_{#1}}}
\newcommand{\bCob}[1]{\ensuremath{{#1}\mathbf{Cob}}}
\def\calMat(#1){\ensuremath{\mathcal{M}\mathit{at}(\mathbf{#1})/\!_{\mathcal{S}\mathit{et}}}}
\def\calSpan{\futurelet\next@char\@calSpan}
\def\@calSpan{\if\next@char(\def\span@next{\@@calSpan}\else\def\span@next{\@@calSpan(S)}\fi\span@next}
\def\@@calSpan(#1){\ensuremath{\mathbf{#1}/\!_{\mathbf{#1}}}}
\def\Span{\futurelet\next@char\@bbSpan}
\def\@bbSpan{\if\next@char(\def\span@next{\@@bbSpan}\else\def\span@next{\@@bbSpan(S)}\fi\span@next}
\def\@@bbSpan(#1){\ensuremath{\mathbf{#1}\mathord{\sslash}\!_{\mathbf{#1}}}}
\def\Mat(#1){\ensuremath{\mathbb{M}\mathbf{at}(#1)\mathord{\sslash}\!_{\mathbb{S}\mathbf{et}}}}
\tikzset{ed/.style={auto,inner sep=0pt,font=\scriptsize}} 
\tikzset{>=stealth'}
\tikzset{vert/.style={draw,circle,inner sep=1pt,fill=white}}
\tikzset{vert2/.style={draw,circle,inner sep=2pt,fill=white}}
\colorlet{myblue}{blue!40!white}
\colorlet{myred}{red!35!white}
\colorlet{mygreen}{green!30!white}
\colorlet{myyellow}{yellow!10!white}
\tikzset{bluefill/.style={fill=myblue}}
\tikzset{redfill/.style={fill=myred}}
\tikzset{greenfill/.style={fill=mygreen}}
\tikzset{yellowfill/.style={fill=myyellow}}
\tikzset{dotsF/.style={pattern=north east lines,pattern color=black!60!white}}
\tikzset{dotsG/.style={pattern=north west lines,pattern color=black!60!white}}
\colorlet{mydarkred}{red!80!black}
\colorlet{mydarkblue}{blue!70!black}
\tikzset{transf/.style={decorate,decoration={zigzag,amplitude=1pt,segment length=3pt}}}
\newenvironment{tikzcenter}{\begin{center}\begin{tikzpicture}}{\end{tikzpicture}\end{center}}
\tikzset{fiber/.style={draw,rectangle,inner sep=2pt,font=\scriptsize}}
\tikzset{pb/.style={draw,regular polygon,regular polygon sides=3,inner sep=0pt,shape border rotate=180,font=\scriptsize}}
\tikzset{pbe/.style={draw,regular polygon,regular polygon sides=3,inner sep=2pt,shape border rotate=180}} 
\tikzset{pbsm/.style={draw,regular polygon,regular polygon sides=3,inner sep=-.5pt,shape border rotate=180,font=\scriptsize}} 
\tikzset{pbflat/.style={draw,regular polygon,regular polygon sides=3,shape border rotate=270,inner sep=1pt,font=\scriptsize}}
\tikzset{pbeflat/.style={draw,regular polygon,regular polygon sides=3,shape border rotate=270,inner sep=2pt}}
\tikzset{pbsmflat/.style={draw,regular polygon,regular polygon sides=3,shape border rotate=270,inner sep=0pt,font=\scriptsize}}
\tikzset{pf/.style={draw,regular polygon,regular polygon sides=3,inner sep=0pt,font=\scriptsize}}
\tikzset{pfe/.style={draw,regular polygon,regular polygon sides=3,inner sep=2pt}} 
\tikzset{pfsm/.style={draw,regular polygon,regular polygon sides=3,inner sep=-.5pt,font=\scriptsize}} 
\tikzset{pfflat/.style={draw,regular polygon,regular polygon sides=3,shape border rotate=90,inner sep=1pt,font=\scriptsize}}
\tikzset{pfeflat/.style={draw,regular polygon,regular polygon sides=3,shape border rotate=90,inner sep=2pt}}
\tikzset{pfsmflat/.style={draw,regular polygon,regular polygon sides=3,shape border rotate=90,inner sep=0pt,font=\scriptsize}}
\tikzset{backwards/.style={z={(-.3,-.7)},x={(-1,0)},y={(0,-1.5)}}}
\tikzset{strings/.style={scale=.75, z={(0,-.4)},x={(-1,0)},y={(0,-2)}}}
\newif\iftikz@to@relp
\newif\iftikz@to@relpp
\tikzstyle{every curve to}=          []
\tikzstyle{curve to}=                [to path=\tikz@to@curve@path]
  \def\pgf@temp{#1}%
    \def\tikz@to@bend{#1}%
  \let\tikz@to@out=\tikz@to@bend%
  \edef\tikz@to@in{\the\c@pgf@counta}%
  \def\pgf@temp{#1}%
    \def\tikz@to@bend{#1}%
  \edef\tikz@to@out{\the\c@pgf@counta}%
  \edef\tikz@to@in{\the\c@pgf@counta}%
\newif\iftikz@to@relative
\def\tikz@to@set@distances#1#2#3#4{%
  \tikz@to@setifnotempy{#1}{\tikz@to@in@min}{\let\tikz@to@end@compute=\tikz@to@end@compute@looseness}%
  \tikz@to@setifnotempy{#2}{\tikz@to@in@max}{\let\tikz@to@end@compute=\tikz@to@end@compute@looseness}%
  \tikz@to@setifnotempy{#3}{\tikz@to@out@min}{\let\tikz@to@start@compute=\tikz@to@start@compute@looseness}%
  \tikz@to@setifnotempy{#4}{\tikz@to@out@max}{\let\tikz@to@start@compute=\tikz@to@start@compute@looseness}%
  \tikz@to@switch@on%
}
\def\tikz@to@setifnotempy#1#2#3{%
  \def\pgf@temp{#1}%
  \ifx\pgf@temp\pgfutil@empty\else\def#2{#1}#3\fi%
}
\def\tikz@to@set@in@looseness#1{%
  \def\tikz@to@in@looseness{#1}%
  \let\tikz@to@end@compute=\tikz@to@end@compute@looseness%
  \tikz@to@switch@on%
}
\def\tikz@to@set@out@looseness#1{%
  \def\tikz@to@out@looseness{#1}%
  \let\tikz@to@start@compute=\tikz@to@start@compute@looseness%
  \tikz@to@switch@on%
}
\def\tikz@to@parse@controls#1and#2\pgf@stop{\tikz@to@set@in@control{#2}\tikz@to@set@out@control{#1}}
\def\tikz@to@set@in@control#1{%
  \def\tikz@to@in@control{#1}%
  \let\tikz@to@end@compute=\tikz@to@end@compute@control%
  \tikz@to@switch@on%
}
\def\tikz@to@set@out@control#1{%
  \def\tikz@to@out@control{#1}%
  \let\tikz@to@start@compute=\tikz@to@start@compute@control%
  \tikz@to@switch@on%
}
\def\tikz@to@bend{30}
\def\tikz@to@out{45}
\def\tikz@to@in{135}
\def\tikz@to@out@looseness{1}
\def\tikz@to@in@looseness{1}
\def\tikz@to@in@min{0pt}
\def\tikz@to@in@max{10000pt}
\def\tikz@to@out@min{0pt}
\def\tikz@to@out@max{10000pt}
\def\tikz@to@switch@on{\let\tikz@to@path=\tikz@to@curve@path}
\def\tikz@to@curve@path{%
  [every curve to]
  \pgfextra{\iftikz@to@relative\tikz@to@compute@relative\else\tikz@to@compute\fi}
  \tikz@computed@path
  \tikztonodes%
  \pgfextra{\tikz@to@relpfalse\tikz@to@relppfalse}%
}
\def\tikz@to@modify#1#2{%
  \pgfutil@ifundefined{pgf@sh@ns@#1}
  {}%
  {\edef#1{#1.#2}}
}%
\def\tikz@to@compute{%
  \let\tikz@tofrom=\tikztostart%
  \let\tikz@toto=\tikztotarget%
  \tikz@to@modify\tikz@tofrom\tikz@to@out%
  \tikz@to@modify\tikz@toto\tikz@to@in%
  \ifx\tikz@to@start@compute\tikz@to@start@compute@looseness%
    \tikz@to@compute@distance%
  \else%
    \ifx\tikz@from@start@compute\tikz@to@start@compute@looseness%
      \tikz@to@compute@distance%
    \fi%
  \fi%
  \tikz@to@start@compute%
  \tikz@to@end@compute%
  \iftikz@to@relp
    \edef\tikz@computed@path{.. controls \tikz@computed@start and \tikz@computed@end .. +(\tikz@toto)}
  \else
    \iftikz@to@relpp  
      \edef\tikz@computed@path{.. controls \tikz@computed@start and \tikz@computed@end .. ++(\tikz@toto)}
    \else
      \edef\tikz@computed@path{.. controls \tikz@computed@start and \tikz@computed@end .. (\tikz@toto)}
    \fi
  \fi
}
\def\tikz@to@compute@distance{\tikz@scan@one@point\tikz@@to@compute@distance(\tikz@tofrom)}
\def\tikz@@to@compute@distance#1{%
  \def\tikz@first@point{#1}%
  \iftikz@to@relp%
    \tikz@scan@one@point\tikz@@@to@compute@distance([shift={(\tikz@toto)}]\tikz@tofrom)%
  \else%
    \iftikz@to@relpp%
      \tikz@scan@one@point\tikz@@@to@compute@distance([shift={(\tikz@toto)}]\tikz@tofrom)%
    \else%
      \tikz@scan@one@point\tikz@@@to@compute@distance(\tikz@toto)%
    \fi%
  \fi}
\def\tikz@@@to@compute@distance#1{%
  \def\tikz@second@point{#1}%
  \tikz@to@compute@distance@main%
}
\def\tikz@to@compute@distance@main{%
  \pgf@process{\pgfpointdiff{\tikz@first@point}{\tikz@second@point}}%
  \ifdim\pgf@x<0pt\pgf@xa=-\pgf@x\else\pgf@xa=\pgf@x\fi%
  \ifdim\pgf@y<0pt\pgf@ya=-\pgf@y\else\pgf@ya=\pgf@y\fi%
  %
  %
  \pgf@process{\pgfpointnormalised{\pgfqpoint{\pgf@xa}{\pgf@ya}}}%
  \ifdim\pgf@x>\pgf@y%
    \c@pgf@counta=\pgf@x%
    \ifnum\c@pgf@counta=0\relax%
    \else%
      \divide\c@pgf@counta by 255\relax%
      \pgf@xa=16\pgf@xa\relax%
      \divide\pgf@xa by\c@pgf@counta%
      \pgf@xa=16\pgf@xa\relax%
    \fi%
  \else%
    \c@pgf@counta=\pgf@y%
    \ifnum\c@pgf@counta=0\relax%
    \else%
      \divide\c@pgf@counta by 255\relax%
      \pgf@ya=16\pgf@ya\relax%
      \divide\pgf@ya by\c@pgf@counta%
      \pgf@xa=16\pgf@ya\relax%
    \fi%
  \fi%
  \pgf@x=0.3915\pgf@xa%
  \pgf@xa=\tikz@to@out@looseness\pgf@x%
  \pgf@xb=\tikz@to@in@looseness\pgf@x%
  \pgfmathsetlength{\pgf@ya}{\tikz@to@out@min}
  \ifdim\pgf@xa<\pgf@ya%
    \pgf@xa=\pgf@ya%
  \fi%
  \pgfmathsetlength{\pgf@ya}{\tikz@to@out@max}
  \ifdim\pgf@xa>\pgf@ya%
    \pgf@xa=\pgf@ya%
  \fi%
  \pgfmathsetlength{\pgf@ya}{\tikz@to@in@min}
  \ifdim\pgf@xb<\pgf@ya%
    \pgf@xb=\pgf@ya%
  \fi%
  \pgfmathsetlength{\pgf@ya}{\tikz@to@in@max}
  \ifdim\pgf@xb>\pgf@ya%
    \pgf@xb=\pgf@ya%
  \fi%
}
\def\tikz@to@start@compute@looseness{%
  \edef\tikz@computed@start{([shift=(\tikz@to@out:\the\pgf@xa)]\tikz@tofrom)}%
}
\def\tikz@to@end@compute@looseness{%
  \edef\tikz@computed@end{+(\tikz@to@in:\the\pgf@xb)}%
}
\def\tikz@to@start@compute@control{%
  \let\tikz@computed@start=\tikz@to@out@control%
}
\def\tikz@to@end@compute@control{%
  \let\tikz@computed@end=\tikz@to@in@control%
}
\let\tikz@to@start@compute=\tikz@to@start@compute@looseness%
\let\tikz@to@end@compute=\tikz@to@end@compute@looseness%
\def\tikz@to@compute@relative{%
  \tikz@scan@one@point\tikz@@to@compute@relative(\tikztostart)%
}
\def\tikz@@to@compute@relative#1{%
  \def\tikz@tofrom{#1}%
  \tikz@scan@one@point\tikz@@@to@compute@relative(\tikztotarget)%
}
\def\tikz@@@to@compute@relative#1{%
  \def\tikz@toto{#1}%
  \begingroup
    %
    %
    \pgfutil@ifundefined{pgf@sh@ns@\tikztostart}
    {%
      \let\tikz@first@point=\tikz@tofrom%
      \let\tikz@tostart@tikz=\pgfutil@empty
    }%
    {%
      {%
        \tikz@tofrom%
        \pgf@xc=\pgf@x%
        \pgf@yc=\pgf@y%
        {%
          \pgftransformreset%
          \pgftransformshift{\pgfqpoint{\pgf@xc}{\pgf@yc}}%
          \pgftransformrotate{\tikz@to@out}%
          \pgftransformshift{\pgfqpoint{-\pgf@xc}{-\pgf@yc}}%
          \pgf@process{\pgfpointtransformed{\tikz@toto}}%
        }%
        \pgf@xc=\pgf@x%
        \pgf@yc=\pgf@y%
        \pgfpointshapeborder{\tikztostart}{\pgfqpoint{\pgf@xc}{\pgf@yc}}%
        \xdef\tikz@tofrom@smuggle{\noexpand\pgfqpoint{\the\pgf@x}{\the\pgf@y}}
      }%
      \let\tikz@first@point=\tikz@tofrom@smuggle%
      \tikz@first@point%
      \edef\tikz@tostart@tikz{(\the\pgf@x,\the\pgf@y)}%
    }%
    \pgfutil@ifundefined{pgf@sh@ns@\tikztotarget}
    {%
      \let\tikz@second@point=\tikz@toto%
    }%
    {%
      {%
        \tikz@toto%
        \pgf@xc=\pgf@x%
        \pgf@yc=\pgf@y%
        {%
          \pgftransformreset%
          \pgftransformshift{\pgfqpoint{\pgf@xc}{\pgf@yc}}%
          \pgftransformrotate{180}%
          \pgftransformrotate{\tikz@to@in}%
          \pgftransformshift{\pgfqpoint{-\pgf@xc}{-\pgf@yc}}%
          \pgf@process{\pgfpointtransformed{\tikz@tofrom}}%
        }%
        \pgf@xc=\pgf@x%
        \pgf@yc=\pgf@y%
        \pgfpointshapeborder{\tikztotarget}{\pgfqpoint{\pgf@xc}{\pgf@yc}}%
        \xdef\tikz@toto@smuggle{\noexpand\pgfqpoint{\the\pgf@x}{\the\pgf@y}}
      }%
      \let\tikz@second@point=\tikz@toto@smuggle%
    }%
    \tikz@second@point%
    \edef\tikz@totarget@tikz{(\the\pgf@x,\the\pgf@y)}%
    %
    %
    \tikz@to@compute@distance@main%
    \edef\tikz@to@first@distance{\the\pgf@xa}%
    \edef\tikz@to@second@distance{\the\pgf@xb}%
    %
    %
    \pgftransformreset%
    \pgf@process{\tikz@first@point}%
    \pgf@xa=\pgf@x%
    \pgf@ya=\pgf@y%
    \pgf@process{\tikz@second@point}%
    \advance\pgf@x by-\pgf@xa%
    \advance\pgf@y by-\pgf@ya%
    \pgfpointnormalised{}%
    \pgf@xc=\pgf@x%
    \pgf@yc=\pgf@y%
    \pgf@xb=-\pgf@x%
    \pgf@yb=-\pgf@y%
    %
    %
    {%
      \pgftransformshift{\tikz@first@point}%
      \pgftransformcm{\pgf@sys@tonumber\pgf@xc}{\pgf@sys@tonumber\pgf@yc}{\pgf@sys@tonumber\pgf@yb}{\pgf@sys@tonumber\pgf@xc}%
                      {\pgfpointorigin}%
      \pgf@process{\pgfpointtransformed{\pgfpointpolar{\tikz@to@out}{\tikz@to@first@distance}}}%
      \xdef\tikz@computed@start{(\the\pgf@x,\the\pgf@y)}%
    }
    {%
      \pgftransformshift{\tikz@second@point}%
      \pgftransformcm{\pgf@sys@tonumber\pgf@xc}{\pgf@sys@tonumber\pgf@yc}{\pgf@sys@tonumber\pgf@yb}{\pgf@sys@tonumber\pgf@xc}%
                      {\pgfpointorigin}%
      \pgf@process{\pgfpointtransformed{\pgfpointpolar{\tikz@to@in}{\tikz@to@second@distance}}}%
      \xdef\tikz@computed@end{(\the\pgf@x,\the\pgf@y)}%
    }
    \xdef\tikz@computed@path{
      \tikz@tostart@tikz
      .. controls \tikz@computed@start and \tikz@computed@end ..
      \tikz@totarget@tikz}%
  \endgroup
}
\title{Traces in symmetric monoidal categories}
\author{Kate Ponto and Michael Shulman}
\thanks{Both authors were supported by National Science Foundation
 postdoctoral fellowships during the writing of this paper.}
\date{Version of \today}
\let\setof\Set
\renewcommand{\Set}{\ensuremath{\mathbf{Set}}}
\newcommand{\fix}{F}
\theoremstyle{definition}
\let\c@conv\c@thm\makeatother
  \numberwithin{conv}{section}
\begin{document}
\maketitle

\tableofcontents

\section*{Introduction}
\label{sec:introduction}

The original notion of \emph{trace} is, of course, the trace of a
square matrix with entries in a field.  An important and far-reaching
categorical generalization of this notion applies to any endomorphism
of a \emph{dualizable object} in a symmetric monoidal category;
see~\cite{dp:duality,kl:cpt}.

This generalization has a number of applications which are often 
closely connected with the study of fixed points.
One application of particular importance is the Lefschetz fixed point 
theorem and its variants and generalizations, many of which can be 
deduced directly from the naturality and functoriality of the canonical 
symmetric monoidal trace.

The purpose of this expository note is to describe this
notion of trace in a symmetric monoidal category, along with its
important properties (including naturality and functoriality), and to give as
many examples as possible.
Among other things, this note is intended as background for~\cite{PS2}
and~\cite{PS3}, in which the symmetric monoidal trace is generalized
to the context of bicategories and indexed monoidal categories, and \cite{kate:traces, kate:rel, equiv, PS4}, which give applications of the 
bicategorical trace to fixed point theory.

In \S\ref{sec:fixed-points} we describe one way to understand the connection between
traces and fixed points.  This provides motivation for the 
formal definitions in \S\ref{sec:traces}.  In \S\ref{sec:examples} we give many examples of
the trace.  These include topological examples connected to the Lefschetz
fixed point theorem and its generalizations as well as examples arising 
in other contexts.  In \S\ref{sec:twist-trac-transf}  we define a generalization
of the trace from \S\ref{sec:traces}.  This trace arises in many applications
and is a generalization of the classical transfer.  Then in \S\ref{sec:prop-trace-smc} we describe
``coherence'' properties of the trace, while in \S\ref{sec:funct-smc} we describe its functoriality and naturality,
including the Lefschetz fixed point theorem as an application.
Finally, in \S\ref{sec:vistas} we remark on some generalizations.

\section{Traces and fixed points}
\label{sec:fixed-points}

A common feature of all the examples we will consider is that \emph{traces give information about fixed points}.
Thus, before embarking on formalities, in this section we will attempt to give some intuition for why this should be so.

Suppose we are working in a monoidal category, and consider a morphism whose source and target are tensor products, such as $f\colon A\otimes B\otimes C \to D\otimes B$.
We think of such an $f$ as a ``process'' which takes three inputs, of types $A$, $B$, and $C$, and produces two outputs, of types $D$ and $B$.
In keeping with this intuition, we draw $f$ as follows:
\begin{tikzcenter}
  \node[draw,rectangle] (f) at (0,0) {$f$};
  \draw[->] (f) -- node [ed,near end] {$B$} +(0.3,-1);
  \draw[->] (f) -- node [ed,near end,swap] {$D$} +(-0.3,-1);
  \draw[<-] (f) -- node [ed,near end,swap] {$C$} +(0.5,1);
  \draw[<-] (f) -- node [ed,near end] {$A$} +(-0.5,1);
  \draw[<-] (f) -- node [ed,near end] {$B$} +(0,1);
\end{tikzcenter}
This is an example of \emph{string diagram notation} for monoidal
categories, which is ``Poincar\'e dual'' to the usual sort of
diagrams: instead of drawing objects as \emph{vertices} and morphisms
as \emph{arrows} connecting these vertices, we draw objects as
\emph{arrows} and morphisms as \emph{vertices}, often with boxes
around them.
See~\cite{penrose:negdimten,js:geom-tenscalc-i,jsv:traced-moncat,selinger:graphical}
for more about string diagram calculus.  In particular, we note that
Joyal and Street~\cite{js:geom-tenscalc-i} proved that the ``value''
of a string diagram is invariant under deformations of diagrams, so
that we can prove theorems by topological reasoning; see
\autoref{thm:smc-ut-cyclicity} for an example.

If the source and target of the morphism $f$ above are the same, then a \emph{fixed point} of $f$ is a morphism 
$f^\dagger\colon \ast\to X$ (where $\ast$ denotes the unit for the monoidal structure) such that
\begin{tikzcenter}
  \node[draw,rectangle] (f) at (0,0) {$f$};
  \draw[->] (f) -- node [ed,near end] {$X$} +(0,-1);
  \node [draw,rectangle] (fd) at (0, 1.2) {$f^\dagger$};
  \draw[<-] (f) -- node [ed,swap] {$X$} (fd);
  \draw[<-] (fd) -- node [ed,near end] {$\ast$} +(0,1.2);
  \node at (1.5,0.5) {$=$};
  \node[draw,rectangle] (fd') at (2.5,0.7) {$f^\dagger$};
  \draw[->] (fd') -- node [ed,near end] {$X$} +(0,-1.5);
  \draw[<-] (fd') -- node [ed,near end] {$\ast$} +(0,1.5);
\end{tikzcenter}

We will be interested in traces of more general morphisms.  For these we will need to 
be able to 
duplicate inputs and outputs, which we draw as follows:
\begin{tikzcenter}
  \draw[->] (0.5,1.2) -- node [ed,near start] {$A$} (0.5,0.5)
  -- node [ed,near end,swap] {$A$} (0.2,-0.2);
  \draw[->] (0.5,0.5) -- node [ed,near end] {$A$} (0.8,-0.2);
\end{tikzcenter}
This is only possible if our monoidal category is \emph{cartesian} monoidal, in which case the above duplication process is the diagonal $\Delta\colon A\to A\times A$.

Now suppose only \emph{one} of the inputs of $f$ matches its output:

\begin{tikzcenter}
  \node[draw,rectangle] (f) at (0,0) {$f$};
  \draw[->] (f) -- node [ed,near end] {$X$} +(0,-1);
  \draw[<-] (f) -- node [ed,near end] {$A$} +(-0.4,1);
  \draw[<-] (f) -- node [ed,near end,swap] {$X$} +(0.4,1);
\end{tikzcenter}
An \emph{($A$-parametrized) fixed point} of $f$ is   
a morphism $f^\dagger\colon A\to X$ such that
\begin{tikzcenter}
  \node[draw,rectangle] (f) at (0,0) {$f$};
  \draw[->] (f) -- node [ed,near end] {$X$} +(0,-1);
  \draw[<-] (f) to[out=120,in=-120,rel] node [ed,near end] {$A$} (0,2) coordinate (delta);
  \path (f) +(0.4,1.2) node[draw,rectangle] (fd) {$f^\dagger$};
  \draw[<-] (f) -- node [ed,near end,swap] {$X$} (fd);
  \draw[<-] (fd) -- (delta) -- node [ed,near end] {$A$} +(0,0.5);
  \node at (1.5,0.5) {$=$};
  \node[draw,rectangle] (fd') at (2.5,0.7) {$f^\dagger$};
  \draw[->] (fd') -- node [ed,near end] {$X$} +(0,-1.5);
  \draw[<-] (fd') -- node [ed,near end] {$A$} +(0,1.5);
\end{tikzcenter}
i.e.\ ``$f(a,f^\dagger(a))=f^\dagger(a)$ for any $a\in A$'' or 
``$f^\dagger(a)\in X$ is a fixed point of $f(a,-)$ for any $a\in A$.''

A common way to look for fixed points in concrete situations is by iteration: we start with some $x_0\in X$ and compute $x_1 = f(a,x_0)$, $x_2 = f(a,x_1)$, and so on.
If ever $x_{n+1}=x_n$, we've found a fixed point.
But even if not, we can hope that the sequence $(x_0,x_1,x_2,\dots)$ will ``converge'' to a fixed point.
Two contexts where this works are the contraction mapping theorem in topology and the least-fixed-point combinator in domain semantics.

In order to mimic this in abstract language, we need a notion of \emph{feedback}, i.e.\ a way to plug the output of a given morphism into its input.
In diagrammatic terms, given a morphism one of whose inputs matches \emph{one} of its outputs:
\begin{tikzcenter}
  \node[draw,rectangle] (f) at (0,0) {$g$};
  \draw[->] (f) -- node [ed,near end,swap] {$B$} +(-0.4,-1);
  \draw[->] (f) -- node [ed,near end] {$X$} +(0.4,-1);
  \draw[<-] (f) -- node [ed,near end] {$A$} +(-0.4,1);
  \draw[<-] (f) -- node [ed,near end,swap] {$X$} +(0.4,1);
\end{tikzcenter}
we want to construct a new morphism in which the $X$ input and output have been ``fed back into each other'' somehow:
\begin{tikzcenter}
  \node[draw,rectangle] (f) at (0,0) {$g$};
  \draw[->] (f) -- node [ed,near end,swap] {$B$} +(-0.4,-1);
  \draw[<-] (f) -- node [ed,near end] {$A$} +(-0.4,1);
  \draw[<-] (f) to[out=70,in=-70,looseness=10] node [ed] {$X$} (f);
\end{tikzcenter}
This is called a \emph{trace} of the morphism $f$.
In fact, Hyland~\cite{bh:traced-premon} and Hasegawa~\cite{hasegawa} have independently observed the following (see \S\ref{sec:prop-trace-smc}).

\begin{thm}\label{hhtrace}
In a cartesian monoidal category, to give a notion of trace is precisely the same as to give a fixed-point operator which assigns to every morphism $A\times X\to X$ a fixed point $A\to X$ in a coherent way.
\end{thm}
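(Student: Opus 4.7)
The plan is to construct an explicit inverse pair of operations---one converting traces to fixed-point operators and the other going back---and then observe that the axioms on each side match up under this correspondence. Throughout, the cartesian structure (diagonal $\Delta\colon X\to X\times X$ and projections $\pi_A,\pi_X$) is the essential extra ingredient that distinguishes the cartesian case from a general traced monoidal category.

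Given a trace operation $\mathrm{Tr}^X_{A,B}\colon \sC(A\times X,\,B\times X)\to \sC(A,B)$ satisfying the usual traced-monoidal axioms, I would define, for each $f\colon A\times X\to X$, the fixed point
\[ f^\dagger \;:=\; \mathrm{Tr}^X_{A,X}\bigl(\Delta\circ f\bigr)\colon A\to X. \]
Diagrammatically, the single $X$-output is duplicated by $\Delta$, one copy fed back along the trace loop and the other emitted. Conversely, given a fixed-point operator $(-)^\dagger$, I would define the trace of $g\colon A\times X\to B\times X$ by extracting the $X$-component $g_X := \pi_X\circ g$, taking its fixed point $g_X^\dagger\colon A\to X$, and setting
\[ \mathrm{Tr}^X_{A,B}(g) \;:=\; \pi_B\circ g\circ \langle 1_A,\, g_X^\dagger\rangle\colon A\to B. \]
A short calculation with diagonals and projections (using that $\pi_X\Delta=1$ and $\pi_B\langle h,k\rangle=h$) shows these two operations are mutually inverse.

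The main work---and the main obstacle---lies in matching the axioms. The trace's yanking axiom should correspond to the defining equation $f^\dagger = f\circ\langle 1_A, f^\dagger\rangle$ of a fixed point; naturality of the trace in $A$ and $B$ matches the analogous naturality of $(-)^\dagger$; the superposing axiom corresponds to the parameterized-fixed-point (``naturality in parameters'') law; and the vanishing/double-trace axiom, which nests two feedback loops, translates into a Beki\v{c}-style identity for fixed points of pairs. The subtle point is dinaturality of the trace in the looped variable $X$: in the cartesian setting it becomes the equational content of iteration-theoretic fixed-point combinators, and is where the verification is most bookkeeping-intensive. I would organize the proof as a table matching each trace axiom to its fixed-point counterpart, verifying in each direction that the bijection above preserves them, and cite Hasegawa and Hyland for the detailed equational manipulations.
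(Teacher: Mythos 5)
The paper does not actually prove this theorem: it states it as an observation of Hyland and Hasegawa and defers entirely to \cite{bh:traced-premon} and \cite{hasegawa}. Your proposal reconstructs exactly the standard correspondence from those references, and both of your formulas are the right ones: $f^\dagger = \mathrm{Tr}^X_{A,X}(\Delta\circ f)$ in one direction, and $\mathrm{Tr}^X_{A,B}(g) = \pi_B\circ g\circ\langle 1_A,(\pi_X\circ g)^\dagger\rangle$ in the other. Your dictionary between axioms is also essentially the accepted one (yanking and the fixed-point equation, naturality with naturality in the parameter, superposing with parameterized naturality, vanishing with a Beki\v{c}-type identity, dinaturality with dinaturality), and you correctly flag that the paper's phrase ``in a coherent way'' conceals the precise list of axioms (a Conway operator) needed for the bijection to hold.

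One caveat on calibration rather than correctness: you describe the mutual-inverse check as ``a short calculation with diagonals and projections.'' Only one round trip is short, namely recovering $(-)^\dagger$ from the induced trace, which does follow immediately from $\pi_X\Delta = \mathrm{id}$ and the fixed-point equation. The other round trip --- showing that the trace rebuilt from $f\mapsto\mathrm{Tr}(\Delta\circ f)$ agrees with the original $\mathrm{Tr}$ on an arbitrary $g\colon A\times X\to B\times X$ --- is the substantive part of Hasegawa's theorem and consumes essentially all of the trace axioms (yanking, superposing, vanishing, naturality, and sliding) at once; it is not separable from the ``axiom matching'' you list as the main work. Also, the fixed-point equation for $f^\dagger$ does not follow from yanking alone but from yanking combined with naturality and dinaturality. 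Since you defer the equational details to Hyland and Hasegawa, these are imprecisions of emphasis rather than gaps, and your outline is a faithful sketch of the known proof.
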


This relationship is especially important in computer science.
However, in topology, we are interested in maps which may have zero, one, or many fixed points.
Thus, we can't expect to have a fixed-point operator acting on the whole category, since there is no way to specify a fixed point for a map which has no fixed points.
Instead, we would like to know, given a map, does it have any fixed points, and if so, how many and what are they?
Thus, we need an operation which produces, instead of a fixed point, some sort of ``invariant'' carrying information about whatever fixed points a map may have.
A fruitful approach to this is to map our cartesian monoidal category \bC\ into a larger category \bD\ in which morphisms can be ``superimposed'' or ``added.''
Then we may hope for a trace or a fixed-point operator in \bD\ which computes the ``sum'' of all the fixed points that a map may have (or ``zero'' if it has none).

Often our functor $Z\colon \bC\to\bD$ will be like the free abelian group functor, and the ``fixed point'' of $Z(f)$ will be something like $\sum_{f(a) = a} a$.
And just as the free abelian group functor maps cartesian products not to cartesian products, but to tensor products, if we want $Z$ to be a monoidal functor, we usually cannot expect \bD\ to be cartesian monoidal, only symmetric monoidal.
Therefore, a trace in \bD\ no longer implies a fixed point operator.
However, since \bC\ is cartesian, we still have diagonal morphisms for objects in the image of $Z$, and this is all we really need.

In fact, if we can find a category \bD\ which is suitably ``additive,'' then it often comes with a canonical notion of trace for free.
The idea is to split the ``feedback'' diagram into a composition of three pieces:
\begin{center}
  \begin{tikzpicture}
    \node[draw,rectangle] (f) at (0,0) {$g$};
    \draw (f) -- node [ed,near end,swap] {$B$} +(-0.4,-1);
    \draw (f) -- node [ed,near end] {$A$} +(-0.4,1);
    \draw (f) to[out=70,in=-70,looseness=15] node [ed] {$M$} (f);
  \end{tikzpicture}
  \qquad\raisebox{2.8cm}{$=$}\qquad
  \begin{tikzpicture}
    \node[draw,rectangle] (f) at (0,0) {$g$};
    \draw (f) -- node [ed,near end,swap] {$B$} +(-0.4,-1);
    \draw (f) -- node [ed,near end] {$A$} +(-0.4,1);
    \draw (f) to[out=70,in=-70,looseness=15] node [ed] {$M$} (f);
    \fill[white] (-1,0.4) rectangle (1,0.6);
    \fill[white] (-1,-0.4) rectangle (1,-0.6);
  \end{tikzpicture}
\end{center}
The morphism
\begin{tikzcenter}
  \draw (0,0) to[out=90,in=90,looseness=3]
  node[ed,very near start] {$M$}
  node[ed,very near end] {$M$}
  (1,0);
\end{tikzcenter}
(from the unit object to $M\ten M$) is called a ``coevaluation'' or ``unit.''
If $M$ is of the form $Z(X)$ for some $X\in \bC$, then the coevaluation is supposed to pick out a formal sum such as $\sum_{x\in X} x\ten x$.
(To be precise, the second string labeled $M$ is actually its ``dual'' \(\rdual{M}\), and so the sum is actually $\sum_{x\in X} x\ten \rdual{x}$.)
Similarly, the morphism
\begin{tikzcenter}
  \draw (0,0) to[out=-90,in=-90,looseness=3]
  node[ed,swap,very near start] {$M$}
  node[ed,swap,very near end] {$M$}
  (1,0);
\end{tikzcenter}
is called an ``evaluation'' or ``counit.''
For $M=Z(X)$, the evaluation is supposed to be supported on pairs of the form $x\ten x$ (or, more precisely, $\rdual{x}\ten x$), and to give zero when applied to a pair $x\ten x'$ for $x\neq x'$.
If this is the case, then the composite
\begin{tikzcenter}
  \node[draw,rectangle] (f) at (0,0) {$f$};
  \draw (f) -- node [ed,near end] {$A$} +(-0.4,1.3);
  \draw (f) -- ++(0,-0.6) coordinate (delta)
  -- node [ed,near end,swap] {$X$} +(-0.4,-1.1);
  \draw (delta) to[out=-70,in=70,looseness=10] node [ed,swap] {$X$} (f);
  \fill[white] (-1,0.5) rectangle (2,0.7);
  \fill[white] (-1,-0.9) rectangle (2,-1.1);
\end{tikzcenter}
will act as follows:
\begin{equation*}
  a \mapsto \sum_{x\in X} a\ten x\ten x
  \mapsto \sum_{x\in X} f(a,x)\ten x
  \mapsto \sum_{x\in X} f(a,x)\ten f(a,x)\ten x
  \mapsto \sum_{x\in X\atop f(a,x)=x} f(a,x).
\end{equation*}
Thus, as desired, it picks out the sum of all the fixed points of $f$.

Traces constructed in this way from evaluation and coevaluation maps are called \emph{canonical} traces.
In~\cite{jsv:traced-moncat}, Joyal and Street showed that any traced monoidal category \bD\ can be embedded in a larger one $\mathrm{Int}(\bD)$, in such a way that the given traces in \bD\ are identified with canonical traces in $\mathrm{Int}(\bD)$.
Therefore, for the purposes of finding fixed-point invariants, there is no loss in restricting our attention to canonical traces.

However, the choice of a particular \bD\ does restrict the maps for which we can calculate our fixed-point invariant, since the resulting objects in \bD\ must admit evaluations and coevaluations.
This property is called being \emph{dualizable}.
For instance, in the free abelian group on $X$, the sum $\sum_{x\in X} x\ten x$ is only defined when $X$ is finite.
A given functor $\bC\to\bD$ thus induces a notion of ``finiteness'' on objects of \bC.
We will see in \S\ref{sec:examples} that different choices of \bD\ can drastically affect the notion of finiteness, as well as the utility and computability of traces.
However, in most applications the choice of \bD\ is straightforward, and the resulting finiteness restriction not onerous.

\section{Traces}
\label{sec:traces}

We now move on to the abstract study of canonical traces in symmetric
monoidal categories; in the next section we will specialize to a
number of examples and see how we obtain information about fixed
points.  We begin with the formal definition of dualizability.

Let \bC\ be a symmetric monoidal category with product $\ten$ and unit
object $I$.
We will omit the associativity and unit isomorphisms of \bC\ from the notation (effectively pretending that \bC\ is strict, as is allowable by the coherence theorem), and we write \fs\ for any instance or composite of instances of the symmetry isomorphism of \bC.

\begin{defn}
  An object $M$ of \bC\ is \textbf{dualizable} if there exists an
  object $\rdual{M}$, called its \textbf{dual}, and maps
  \begin{align*}
    I &\too[\eta] M\ten \rdual{M} & \rdual{M}\ten M &\too[\ep] I
  \end{align*}
  satisfying the triangle identities
  \[(\id_M\ten \ep)(\eta\ten \id_M) = \id_M \quad\text{and}\quad
  (\ep\ten \id_\rdual{M})(\id_\rdual{M}\ten \eta) = \id_{\rdual{M}}.\]
\end{defn}
 We call $\ep$ the \textbf{evaluation} and $\eta$
  the \textbf{coevaluation}; some authors call them the \emph{counit}
  and the \emph{unit}.  We say that \bC\ is \textbf{compact closed}
  if every object is dualizable.

As suggested in \S\ref{sec:fixed-points}, dual pairs are
represented graphically by turning around the direction of arrows; see
\autoref{fig:moncat-dual}.
Note that the unit object $I$ is represented by the lack of any strings, such as in the input to $\eta$ and the output of $\varepsilon$.
Strictly speaking, there should be boxes at the ends of these ``caps'' and ``cups'' labeled by $\eta$ and $\varepsilon$ respectively, but these labels are almost universally omitted in string diagram notation (this is also justified by a theorem of Joyal and Street).
The triangle identities for a dual
pair translate graphically as ``bent strings can be straightened;''
see \autoref{fig:triangle}.

\begin{figure}[tb]
  \centering
    \begin{tabular}{m{30mm}m{20mm}m{40mm}m{20mm}}
    \begin{center}Coevaluation\\$\eta\maps I \to M\ten \rdual{M}$\end{center} &
    \begin{center}
      \begin{tikzpicture}
        \draw[<-] (0,0) to[out=90,in=180] node [ed,near start] {$M$} (0.5,1)
        to[out=0,in=90] node [ed,near end] {$\rdual{M}$} (1,0);
      \end{tikzpicture}
    \end{center}
    &
    \begin{center}Evaluation\\$\ep\maps \rdual{M}\ten M\to I$\end{center} &
      \begin{tikzpicture}
        \draw[<-] (0,0) to[out=-90,in=180] node [ed,near start,swap] {$\rdual{M}$} (0.5,-1)
        to[out=0,in=-90] node [ed,near end,swap] {$M$} (1,0);
      \end{tikzpicture}
  \end{tabular}
  \caption{Coevaluation and evaluation}
  \label{fig:moncat-dual}
\end{figure}
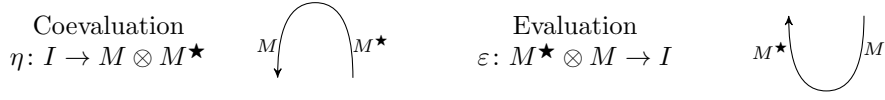 

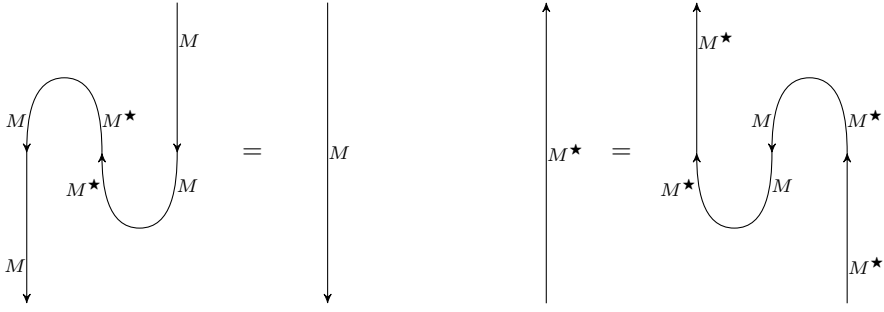
\begin{figure}[tb]
  \centering
  \begin{tabular}{m{50mm}m{15mm}m{50mm}}
    \begin{tikzpicture}
      \draw[<-] (0,0) -- node[ed,near start] {$M$} ++(0,2) coordinate (a);
      \draw[<-] (a) to[rrel,out=90,in=180] node [ed,near start] {$M$} (0.5,1)
      to[rrel,out=0,in=90] node [ed,near end] {$\rdual{M}$} (0.5,-1) coordinate (b);
      \draw[<-] (b) to[rrel,out=-90,in=180] node [ed,near start,swap] {$\rdual{M}$} (0.5,-1)
      to[rrel,out=0,in=-90] node [ed,near end,swap] {$M$} (0.5,1) coordinate (c);
      \draw[<-] (c) -- node[ed,near end,swap] {$M$} ++(0,2);
      \node at (3,2) {$=$};
      \draw[<-] (4,0) -- node[ed,swap] {$M$} ++(0,4);
    \end{tikzpicture} &
    &
    \begin{tikzpicture}
      \draw[->] (-2,0) -- node[ed,swap] {$\rdual{M}$} ++(0,4);
      \node at (-1,2) {$=$};
      \draw[<-] (0,4) -- node[ed,near start] {$\rdual{M}$} ++(0,-2) coordinate (a);
      \draw[<-] (a) to[rrel,out=-90,in=180] node [ed,near start,swap] {$\rdual{M}$} (0.5,-1)
      to[rrel,out=0,in=-90] node [ed,near end,swap] {$M$} (0.5,1) coordinate (b);
      \draw[<-] (b) to[rrel,out=90,in=180] node [ed,near start] {$M$} (0.5,1)
      to[rrel,out=0,in=90] node [ed,near end] {$\rdual{M}$} (0.5,-1) coordinate (c);
      \draw[<-] (c) -- node[ed,near end] {$\rdual{M}$} ++(0,-2);
    \end{tikzpicture}
  \end{tabular}
  \caption{The triangle identities}
  \label{fig:triangle}
\end{figure}
Any two duals of an object $M$ are isomorphic; an isomorphism can be
constructed from $\eta$ and $\ep$.  If $\rdual{M}$ is a dual of $M$,
then $M$ is a dual of $\rdual{M}$.  And if $M$ and $N$ are dualizable,
any map $f\maps Q\ten M\to N\ten P$ has a \textbf{dual} or
\textbf{mate}
\[\rdual{f}\maps \rdual{N}\ten Q\to P\ten \rdual{M},\]
given by the composite
\[\rdual{N}\ten Q \xto{\id\ten\id\ten \eta} \rdual{N}\ten Q\ten M \ten \rdual{M}
\xto{\id\ten f \ten \id} \rdual{N} \ten N\ten P \ten \rdual{M}
\xto{\ep \ten \id\ten \id} P\ten \rdual{M}.
\]
In particular, if $M$ is dualizable, any endomorphism $f\maps M\to M$
has a dual $\rdual{f}\maps \rdual{M}\to \rdual{M}$.

There are various equivalent characterizations of dualizable objects.
For example, when \bC\ is closed, $M$ is dualizable if and only if the
canonical map \[M\ten \Hom(M,I) \to \Hom(M,M)\] is an isomorphism.
However, for us the above definition is most appropriate.

We now move on to the simplest form of trace.

\begin{defn}\label{def:comm-trace}
  Let \bC\ be a symmetric monoidal category, $M$ a dualizable object
  of \bC\, and $f\maps M\to M$ an endomorphism of $M$.  The
  \textbf{trace} of $f$, denoted $\tr(f)$, is the following composite:
  \begin{equation}
    I \too[\eta] M\ten \rdual{M} \too[f\ten \id] M\ten \rdual{M}
    \xiso{\fs} \rdual{M}\ten M \too[\ep] I.
    \label{eq:comm-trace}
  \end{equation}
  The \textbf{Euler characteristic} of a dualizable $M$ is the trace
  of its identity map.
\end{defn} 

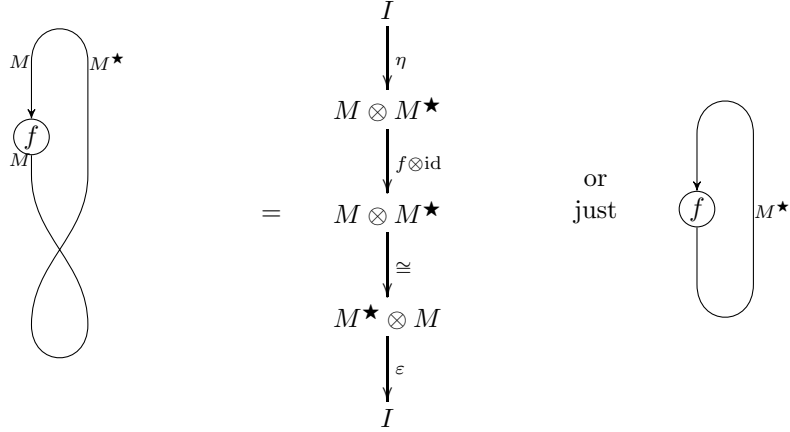
\begin{figure}[tb]
  \centering
  \begin{tabular}{m{30mm}m{5mm}m{25mm}m{15mm}m{20mm}}

  \begin{tikzpicture}
   \node[vert](f) at (0, -1.5){$f$};
   
   \draw[->](f)--node[ed,swap, near start]{$M$}(0, -2)to [out=-90, in =90](.75, -4)to [out=-90, in =-90, looseness=2] (0,-4)
      to [out=90, in =-90, looseness=1] (.75, -2)--node[ed,swap, at end]{$\rdual{M}$}(.75, -.5) to [out=90, in =90, looseness=2] (0, -.5)--
     node[ed,swap, at start]{$M$}(f);

\end{tikzpicture}

    & $=$ &
    $\xymatrix{I \ar[d]^{\eta} \\ M\ten \rdual{M} \ar[d]^{f\ten
        \id} \\ M \ten \rdual{M} \ar[d]^{\iso} \\ \rdual{M}\ten
      M\ar[d]^{\ep} \\ I}$
    & \begin{center}or\\just\end{center} &
  \begin{tikzpicture}
   \node[vert](f) at (0, -1){$f$};
   
   \draw[->](f)--(0, -2)to [out=-90, in =-90, looseness=2] (.75,-2)
    --node[ed,swap]{$\rdual{M}$}(.75, 0) to [out=90, in =90, looseness=2] (0, 0)--(f);

\end{tikzpicture}
  \end{tabular}
  \caption{The trace}
  \label{fig:ut-smctrace}
\end{figure}

The trace of a morphism translates as graphically as ``feeding its output into
its input;'' see \autoref{fig:ut-smctrace}.

References for this notion of trace
include~\cite{dp:duality,fy:brd-cpt,js:brd-tensor,jsv:traced-moncat,kl:cpt}.
It is an endomorphism of the unit object $I$, and
does not depend on the choice of dual for $M$ or on the choice of the
maps $\eta$ and $\ep$.  It also has the following fundamental property.

\begin{prop}[Cyclicity]\label{thm:smc-ut-cyclicity}
  For any $f\maps M\to N$ and $g\maps N\to M$ with $M,N$ both
  dualizable, we have $\tr(fg)=\tr(gf)$.
\end{prop}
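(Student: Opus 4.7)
The plan is to prove this diagrammatically, invoking the Joyal--Street theorem (cited in the excerpt) that the value of a string diagram in a symmetric monoidal category depends only on its planar isotopy class.

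First I draw $\tr(gf)$ using a dual pair $(M,\rdual{M},\eta_M,\ep_M)$: by \autoref{fig:ut-smctrace}, it is a closed loop created by $\eta_M$ and closed by $\ep_M$, with the composite endomorphism $g\circ f\maps M\to M$ sitting on one side. Expanding that composite into two separate boxes, the segment of the loop above $f$ is labeled $M$, the segment between $f$ and $g$ is labeled $N$, and the segment below $g$ is labeled $M$ again. Then I draw $\tr(fg)$ using a dual pair $(N,\rdual{N},\eta_N,\ep_N)$: a closed loop, created by $\eta_N$ and closed by $\ep_N$, with boxes $g$ then $f$ on one side and strand labels $N$, $M$, $N$.

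Next I exhibit a planar isotopy between the two pictures: I slide the box $f$ once around the loop, dragging the closing cup and cap with it. Since the cup and cap carry no labels in string-diagram notation, the net effect of the isotopy is simply to cyclically shift the strand labels around the circle, converting the first diagram into the second. By the Joyal--Street theorem the two composites agree, which is exactly $\tr(gf)=\tr(fg)$.

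The main obstacle, if one insists on a purely algebraic proof, is that this ``sliding'' step bundles together several uses of naturality of $\fs$ and the triangle identities. Concretely, one rewrites $(f\ten\id_{\rdual{M}})\circ\eta_M$ as $(\id_N\ten\rdual{f})\circ\eta_N$ using the defining property of the mate $\rdual{f}\maps\rdual{N}\to\rdual{M}$, commutes $\rdual{f}$ past $g$ via naturality of $\fs$, and finally absorbs $\rdual{f}$ into $\ep_N$ by the dual mate identity $\ep_M\circ(\rdual{f}\ten\id_M)=\ep_N\circ(\id_{\rdual{N}}\ten f)$ to recognise the result as $\tr(fg)$. This bookkeeping is routine but unilluminating; the pictorial argument makes the essential content---cyclic rotation of the loop---manifest.
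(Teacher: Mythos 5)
Your proof is correct and is essentially the paper's own argument: the paper's displayed computation is precisely the algebraic transcription of sliding one box around the loop (its Figure~4 depicts exactly the isotopy you describe), with the inserted and removed zigzags playing the role of your mate identities. The only cosmetic difference is that you package the triangle-identity steps into the two mate equations $(f\ten\id_{\rdual{M}})\circ\eta_M=(\id_N\ten\rdual{f})\circ\eta_N$ and $\ep_M\circ(\rdual{f}\ten\id_M)=\ep_N\circ(\id_{\rdual{N}}\ten f)$, whereas the paper inserts the zigzag explicitly and shuffles $f$ and $g$ past each other by interchange and naturality of \fs.
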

\begin{proof}
  The following proof is really only rendered comprehensible by string
  diagram notation (see \autoref{fig:smc-ut-cyclicity}
).
  \begin{multline*}
    \scriptstyle\tr(fg) = \ep \fs (fg \ten \id) \eta = \ep\fs(f\ten
    \id)(g\ten \id)\eta =
    \ep\fs(f\ten \id)(\id\ten\ep\ten\id)(\eta\ten\id\ten\id)(g\ten \id)\eta =\\
    \scriptstyle\ep\fs(\id\ten\ep\ten\id)(f\ten
    \id\ten\id\ten\id)(\id\ten\id\ten g\ten \id)(\eta\ten \eta) =
    (\ep\ten\ep)\fs(\id\ten\id\ten g\ten \id)(f\ten \id\ten\id\ten\id)(\eta\ten \eta) =\\
    \scriptstyle(\ep\ten\ep)\fs(\id\ten\id\ten g\ten
    \id)(\id\ten\id\ten \eta)(f\ten \id)\eta =
    \ep\fs(\id\ten\ep\ten\id)(g\ten\id\ten\id\ten\id)(\eta\ten\id\ten\id)(f\ten\id)\eta =\\
    \scriptstyle\ep\fs(g\ten\id)(\id\ten\ep\ten\id)(\eta\ten\id\ten\id)(f\ten\id)\eta
    = \ep\fs(g\ten\id)(f\ten\id)\eta = \ep\fs(gf\ten\id)\eta = \tr(gf)\qedhere
  \end{multline*}
\end{proof}

\begin{figure}
  \begin{tabular}{m{10mm}m{2mm}m{22mm}m{2mm}m{22mm}m{2mm}m{19mm}m{0mm}m{10mm}}
  \begin{tikzpicture}[scale=.8]
   \node[vert](f) at (0, -1){$f$};
   \node[vert2](g) at (0, -3){$g$};
   
   \draw[->](f)--node[ed,swap]{$N$}(g);

   \draw[->](g)to [out=-90, in =90] (.75, -5)to [out=-90, in =-90, looseness=2] (-.0,-5)
      to [out=90, in =-90, looseness=1] 
     (.75,-3.25)--node[ed,swap, at end]{$\rdual{M}$}(.75, 0) to [out=90, in =90, looseness=2] (0, 0)--
     node[ed,swap, at start]{$M$}(f);

\end{tikzpicture}
&=&

  \begin{tikzpicture}[scale=.8]
   \node[vert](f) at (0, -1){$f$};
   \node[vert2](g) at (-1.5, -3){$g$};
   
   \draw[->](f)--node[ed]{$N$}(0, -2) to [out=-90, in =-90, looseness=2](-.75, -2) node[ed, right]{$\rdual{N}$}
     to [out=90, in =90, looseness=2](-1.5,-2)--node[ed, swap, at start]{$N$} (g);

   \draw[->](g)--node[ed, swap]{$M$}(-1.5, -3.5) to [out=-90, in =90](0, -5)to [out=-90, in =-90, looseness=2] (-.75,-5)
      to [out=90, in =-90, looseness=1]
     (.75,-3.5)--node[ed, swap, at end]{$\rdual{M}$}(.75, 0) to [out=90, in =90, looseness=2] (0, 0)--
     node[ed, at start, swap]{$M$}(f);

\end{tikzpicture}
&=&

\begin{tikzpicture}[scale=.8]
   \node[vert](f) at (0, -3){$f$};
   \node[vert2](g) at (-1.5, -1){$g$};
   
   \draw[->](f) --node[ed, swap]{$N$} (0, -3.5)  to [out=-90, in =-90, looseness=2](-.75, -3.5) --
       node[ed, right, at end]{$\rdual{N}$}(-.75, 0)to [out=90, in =90, looseness=2](-1.5,0)--node[ed, swap, at start]{$N$}  (g);

   \draw[->](g)--node[ed, swap, near start]{$M$}(-1.5, -3.5)to [out=-90, in =90](0, -5)to [out=-90, in =-90, looseness=2] (-.75,-5)
      to [out=90, in =-90, looseness=1](.75,-3.5)--node[ed, swap, at end]{$\rdual{M}$}(.75, -2) to [out=90, in =90, looseness=2] (0, -2)--
            node[ed, at start, swap]{$M$}(f);

\end{tikzpicture} 
&=&

  \begin{tikzpicture}[scale=.8]
   \node[vert](f) at (-3, -4){$f$};
   \node[vert2](g) at (-1.5, -2){$g$};
   
   \draw[->](f) --node[ed, swap]{$N$}(-3, -5) to [out=-90, in =-90, looseness=2](-3.75, -5) --
     (-3.75, -1.5) to[out=90, in =-90, looseness=.8]
       node[ed, right, at end]{$\rdual{N}$} (-2.25, 0)to [out=90, in =90, looseness=2](-3,0)to[out=-90, in =90, looseness=.8] 
node[ed, swap, at start]{$N$}(-1.5, -1.5)-- (g);

   \draw [->](g)--node[ed, near end]{$M$}(-1.5, -3)to [out=-90, in =-90, looseness=2] (-2.25,-3)
     to [out=90, in =90]node[ed, swap, at end]{$\rdual{M}$}(-2.25, -3) to [out=90, in =90, looseness=2] (-3, -3)--
       node[ed, at start, swap]{$M$}(f);

\end{tikzpicture}
&=&
  \begin{tikzpicture}[scale=.8]
   \node[vert](f) at (-1.5, -4){$f$};
   \node[vert2](g) at (-1.5, -2){$g$};
   
   \draw[->](f) --(-1.5, -5) to [out=-90, in =-90, looseness=2](-2.25, -5) --
     (-2.25, -1.5) to[out=90, in =-90, looseness=.8]
       node[ed, right, at end]{$\rdual{N}$} (-1.5, 0)to [out=90, in =90, looseness=2](-2.25,0)to[out=-90, in =90, looseness=.8] 
node[ed, swap, at start]{$N$}(-1.5, -1.5)-- (g);

   \draw [->](g)--node[ed]{$M$}(f);

\end{tikzpicture}
\end{tabular}
\caption{Cyclicity of the trace}
  \label{fig:smc-ut-cyclicity}
\end{figure}
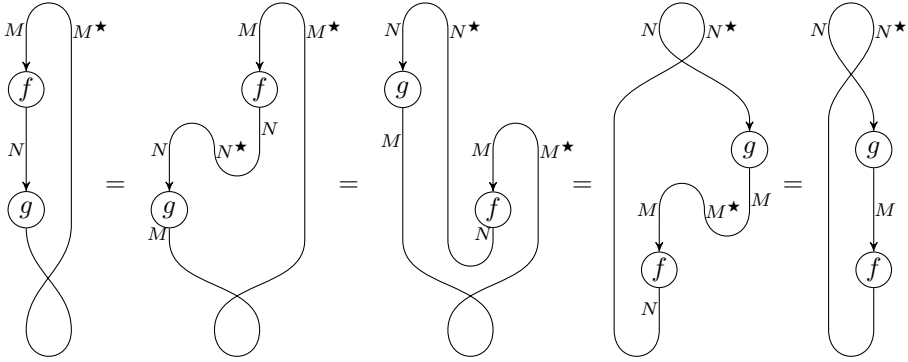

We will consider additional properties of the 
trace in \S\ref{sec:prop-trace-smc}.

\section{Examples of traces}
\label{sec:examples}

\begin{eg}
  Let $\bC=\Vect{k}$ be the category of vector spaces over a
  field $k$.  A vector space is dualizable if and only if it is
  finite-dimensional, and its dual is the usual dual vector space.  We
  have $I=k$ and $\bC(I,I)\iso k$ by multiplication.  Using this identification,
  \autoref{def:comm-trace} recovers the usual trace of a matrix.  The
  Euler characteristic of a vector space is its dimension.
\end{eg}

\begin{eg}
  Let $\bC=\bMod{R}$ be the category of modules over a
  commutative ring $R$.  The dualizable objects are the finitely
  generated projectives.  As in $\Vect{k}$, we have $I=R$ and
  $\bC(R,R)\iso R$, so every endomorphism of a finitely generated projective module has a
  trace which is an element of $R$.  The Euler
  characteristic of such a module is its rank, regarded as an element
  of $R$ (so that, for instance, the Euler characteristic of a
  rank-$p$ free $(\bbZ/p)$-module is zero).
\end{eg}

\begin{eg}
  Again, let $R$ be a commutative ring and consider the category
  $\bCh{R}$ of chain complexes of $R$-modules, with its symmetric
  monoidal tensor product.  The ``correct'' symmetry isomorphism
  $M\ten N \iso N\ten M$ introduces a sign: $a\ten b \mapsto
  (-1)^{|a||b|}(b\ten a)$.  The dualizable objects are again the
  finitely generated projectives, the unit is again $R$ itself (in
  degree 0), and endomorphisms of the unit can again be identified
  with elements of $R$.  The trace of an endomorphism of a finitely
  generated projective chain complex, called its \emph{Lefschetz number}, is
  the alternating sum of its degreewise traces.  Likewise, the Euler
  characteristic of such a chain complex is the alternating sum of its
  ranks.  This generalizes straightforwardly to modules over a DGA.
\end{eg}

\begin{eg}
  There is also a symmetric monoidal category $\Ho(\bCh{R})$,
  called the \emph{derived category} of $R$, obtained from
  $\bCh{R}$ by formally inverting the quasi-isomorphisms
  (morphisms which induce isomorphisms on all homology groups).  The
  dualizable objects in $\Ho(\bCh{R})$ are those that are
  quasi-isomorphic to an object that is dualizable in $\bCh{R}$, and
  the two kinds of traces also agree.
\end{eg}

\begin{eg}\label{eg:ncob-smc}
  Let $\bCob{n}$ be the category whose objects are closed
  $(n-1)$-dimensional manifolds, and whose morphisms are
  diffeomorphism classes of cobordisms.  Composition is by gluing,
  cylinders $M\times [0,1]$ give identities, and disjoint union
  supplies a symmetric monoidal structure.  The unit object is the
  empty set $\emptyset$, and an endomorphism of $\emptyset$ is just a
  closed $n$-manifold.

  Every object of $\bCob{n}$ is dualizable: the evaluation and
  coevaluation are both $M\times [0,1]$, regarded either as a
  cobordism from $\emptyset$ to $M\sqcup M$ or from $M\sqcup M$ to
  $\emptyset$.  The trace of a cobordism from $M$ to $M$ is
  the closed $n$-manifold obtained by gluing the two components of
  its boundary together.  In particular, the Euler characteristic of a
  closed $(n-1)$-manifold $M$ is $M\times S^1$, regarded as a
  cobordism from $\emptyset$ to itself.
\end{eg}

\begin{eg}\label{eg:cart-dual}
  In a \emph{cartesian} monoidal category, 
  the only dualizable object is the terminal object.  Thus, in this
  case there are no interesting traces.  However, as suggested in \S\ref{sec:fixed-points}, often we can obtain
  useful dualities and traces by applying a functor from such a
  category to a non-cartesian monoidal category.  Such a functor $F$
  induces a notion of ``finiteness'' on its domain in the evident way:
  $X$ is ``finite'' if $F(X)$ is dualizable.  For an endomorphism $f$ of
  such an $X$, we can then compute the trace of $F(f)$.

  Probably the simplest such functor is the free abelian group
  functor $\bbZ[-]\maps \Set\to\Ab$.  Of course, $\bbZ [X]$ is
  dualizable in \Ab\ if and only if $X$ is a finite set.  If
  $f\maps X\to X$ is an endomorphism of a finite set, then the trace
  of $\bbZ [f]\maps \bbZ [X]\to \bbZ [X]$ is easily seen to be simply the
  number of fixed points of $f$.
  This justifies the hope expressed in \S\ref{sec:fixed-points} that by mapping a cartesian monoidal category into an ``additive'' one, we could extract information about fixed points which may or may not be present.
  The next few examples can also be viewed in this light.
\end{eg}

\begin{eg}\label{eg:stab-duality}
  Suppose that instead of a set we start with a topological space.
  The category \Top\ is, of course, cartesian monoidal, so in order to
  obtain interesting dualities we need to apply a functor landing in
  some non-cartesian monoidal category.  One obvious guess, by analogy
  with \autoref{eg:cart-dual}, would be the category of abelian topological
  groups and the free abelian topological group functor.  It usually
  turns out to be better, however, to use a more refined notion: the
  category \Sp\ of \emph{spectra}.

  For the reader unfamiliar with spectra some intuition can be gained
  as follows.  A connective spectrum can be thought of as analogous to an
  abelian topological group, except that its group structure is only
  associative, unital, and commutative up to homotopy and all higher
  homotopies.  The passage from connective spectra to arbitrary
  spectra is then analogous to the passage from bounded-below chain
  complexes to arbitrary ones.  There is a symmetric monoidal category
  \Sp\ of spectra  and a ``free'' functor
  $\Sip\maps \Top\to\Sp$, usually called the
  \emph{suspension spectrum} functor.
  (Actually, there are many such categories
    \Sp, all equivalent up to homotopy, but each having different
    technical advantages and disadvantages; see for instance~\cite{ekmm,mmss,mm:eqosp-smod}.
    We will generally gloss over such distinctions.)
    The monoidal structure of \Sp\
  is called the ``smash product'' $\wedge$, and its unit object is the
  \emph{sphere spectrum} $S$ (which can be identified with
  $\Sip$ of a point).

  Since \Sp\ is not cartesian monoidal, we can hope for it to have an
  interesting duality theory.  However, it turns out that in \Sp\ it
  is only reasonable to ask for duality up to homotopy.  Thus, instead
  of \Sp\ we usually work with the category $\Ho(\Sp)$ obtained from
  it by inverting the ``stable equivalences;'' this is called the \emph{stable
    homotopy category}.  We still have a functor $\Sip\maps
  \Top\to\Ho(\Sp)$ which factors through $\Ho(\Top)$ (in which we
  invert the weak homotopy equivalences).  The reason for the use of
  ``stable'' is that for compact spaces $M$ and $N$, the homset
  $\Ho(\Sp)(\Sip(M),\Sip(N))$ can be identified
  with the set of stable homotopy classes of maps from $M$ to $N$,
  i.e.\ the colimit over $n$ of the sets of homotopy classes of maps $\Sigma^n(M_+) \to
  \Sigma^n(N_+)$.

  One can now show that if $M$ is a closed smooth manifold, or more
  generally a compact ENR (Euclidean Neighborhood Retract), 
  then $\Sip(M)$ is dualizable in
  $\Ho(\Sp)$; its dual is the Thom spectrum $T\nu$ of its stable
  normal bundle.
  This is proven in \cite{atiyah:thom, dp:duality,lms:equivariant}.
  The set of endomorphisms of the sphere spectrum $S$
  is $\colim_n \pi_n(S^n)$, which is isomorphic to $\mathbb{Z}$;
  thus traces in $\Ho(\Sp)$ can be
  identified with integers.

  Using this identification, the trace of an endomorphism can be
  identified with its \emph{fixed point index}. 
The fixed point index
  is an integer which is defined classically, for a map with isolated
  fixed points, as the sum over all fixed points $x$ of the degree of
  the self-map induced by the ``difference'' of the identity map 
  and the endomorphism on a sufficiently small sphere surrounding $x$.
  This turns out to be homotopy invariant, and so for an arbitrary map
  it can be defined by homotoping to a map with isolated fixed points.
  In particular, this is necessary for the identity map, whose fixed
  point index is the Euler characteristic of the manifold (this is, of
  course, the origin of the term ``Euler characteristic'' for traces
  of identity maps in general).  See \cite{b:lefschetz, d:book} for the classical 
approach to the index and \cite{d:index_enr, d:index,dp:duality}
  for the identification of this trace with the classical fixed point index and Euler characteristic.

 For compact spaces  the induced notions of duality and trace can also be formulated
  without using the stable homotopy category; we replace $S$ with
  the $n$-sphere $S^n$ for some large enough finite $n$.  In this
  guise it is called \emph{$n$-duality}; references
  include~\cite{dp:duality,lms:equivariant}.
\end{eg} 

$\Sp$ and $\bCh{R}$ are two instances of a general phenomenon: a
symmetric monoidal category that has an associated symmetric monoidal
homotopy category.  A general theory of when and how symmetric
monoidal structures descend to homotopy categories is given by the
study of \emph{monoidal model categories}, as
in~\cite[Ch.~4]{hovey:modelcats}.

\begin{eg}\label{eg:eqv-parm-duality}
  For a compact Lie group $G$, there is an \emph{equivariant stable
    homotopy category} $\Ho(\gSp{G})$, which is related to the
  category $\gTop{G}$ of $G$-spaces in the same way that $\Ho(\Sp)$ is
  related to \Top; see for instance~\cite{mm:eqosp-smod}.
  Now the suspension and stabilization take place
  not relative to ordinary spheres $S^n$, but relative to
  representations of $G$.  The category $\Ho(\gSp{G})$
  is also symmetric monoidal and admits
  a suspension $G$-spectrum functor from $G$-spaces.

  The dualizable objects in $\Ho(\gSp{G})$
  include the equivariant suspension spectra of
  closed smooth $G$-manifolds and compact $G$-ENR's.
  Such dual pairs can be also described using $V$-duality
  for a representation $V$.  A reference for equivariant duality is
  \cite{lms:equivariant}.  Traces in $\Ho(\gSp{G})$ are again called
  fixed point indices; see~\cite{td:groups, ulrich, equiv}.
\end{eg}

\begin{eg}\label{eg:parm-duality}
  Another variation is to consider \emph{parametrized} duality, which
  instead of spaces or $G$-spaces starts from spaces \emph{over} a
  base space $B$.  In~\cite{maysig:pht}, May and Sigurdsson construct
  a category $\bEx B$ of \emph{parametrized spectra} over $B$, which
  is symmetric monoidal, has a symmetric monoidal homotopy
  category $\Ho(\bEx{B})$, and admits a
  suspension functor $\Sigma_{B,+}^{\infty}$ from $\Top/B$ that is
  similar to $\Sip$.

  If $M$ is a fibration over $B$, then
  $\Sigma^\infty_{B,+}(M)$ is dualizable in $\Ho(\bEx{B})$ if and only if
  each of its fibers is dualizable in the usual stable homotopy
  category.  In particular, a fibration of closed smooth manifolds gives rise to a
  dualizable parametrized spectrum.  The trace of a fiberwise
  endomorphism is once again called its fixed point index;
  see~\cite{d:index}.
\end{eg}

\begin{rmk}
For parametrized spaces and spectra it is often more
  illuminating to consider a different type of duality called
  \emph{Costenoble-Waner duality}, and its associated notion of trace.
  These notions of duality and trace do not take place in a symmetric
  monoidal category, but rather in a bicategory arising from an
  indexed symmetric monoidal category; see~\cite[Ch.~18]{maysig:pht}
  and~\cite{kate:traces, PS2,PS3}.
\end{rmk}

Here are some more ``toy'' examples.

\begin{eg}\label{eg:rel}
  Let $\bRel$ be the category whose objects are sets and whose
  morphisms from $M$ to $N$ are relations $R\subset M\times N$; we
  write $R\maps M\hto N$ to avoid confusion with functions $M\to N$.
  If $S\maps N\hto P$ is another relation, their composite is
  \[S R = \Big\{(m,p) \;\Big|\; \exists n \text{ with }
  (m,n)\in R \text{ and } (n,p)\in S\Big\}.
  \]
  A symmetric monoidal structure on $\bRel$ is induced by the
  cartesian product of sets (which is \emph{not} the cartesian product
  in \bRel).

  There is a functor $\Set\to\bRel$ which is the identity on objects
  and takes a function $f\maps X\to Y$ to its graph $\Gamma_f =
  \setof{(x,f(x)) | x\in X }$.  Moreover, \emph{every} set is
  dualizable in $\bRel$, and moreover is its own dual; the relations
  \eta\ and \ep\ are both the identity relation on $X$, considered as
  a relation $*\hto X\times X$ or $X\times X\hto *$, respectively.
  Thus every set is ``finite'' relative to this functor.  However, the
  trade off is that traces contain correspondingly less information, since
  the only endomorphisms of the unit $*$ in $\bRel$ are the empty
  relation and the full one.  If we regard these as the \emph{truth
    values} ``false'' and ``true,'' respectively, then the trace of a relation
  $R\maps M\hto M$ is the truth value of the statement ``$\exists
  m: (m,m)\in R$.''  In particular, for a function $f\maps M\to M$, 
  $\tr(\Gamma_f)$ is true if and only if $f$ has a fixed point.

  This example can be generalized to internal relations in
  any suitably well-behaved category.
\end{eg}

\begin{eg}\label{eg:sup}
  Let $\Sup$ denote the category of \emph{suplattices}: that
  is, its objects are posets with all suprema and its morphisms are
  supremum-preserving maps.  (Of course, a suplattice also has all
  infima, but suplattice maps need not preserve infima.)  There is a
  tensor product of suplattices, concisely described by saying that
  suplattice maps $M\ten N\to P$ represent functions $M\times N\to P$
  which preserve suprema in each variable separately.  The unit object
  is the suplattice $I = (0\le 1)$.

  We can see an analogy between \Sup\ and \Ab\ by regarding suprema in
  a suplattice as similar to sums in an abelian group.  For instance,
  there is a ``free suplattice'' functor $\Set\to\Sup$ which simply
  takes a set $A$ to its power set $\calP(A)$; the ``free generators''
  are the singleton sets, and a subset $B\subseteq A$ is the ``formal
  sum'' $\sum_{x\in B} \{x\}$.  Every such
  power set is dualizable, so every set is ``finite'' relative to the
  functor $\calP$.  Explicitly, we have $\rdual{\calP(A)} \iso \calP
  (A)$ with coevaluation
  \[\eta(1) = \bigvee_{a\in A} \{a\}\ten \{a\}\]
  and evaluation
  \[\ep(X\ten Y) =
  \begin{cases}
    1 & \text{if } X\cap Y \neq \emptyset\\
    0 & \text{otherwise}.
  \end{cases}
  \]
  Note that a suplattice map $f\maps \calP (A) \to \calP (A)$ is equivalent to
  a function $A\to \calP (A)$, and thereby to a relation $R_f\subset
  A\times A$.  The trace of such a map in $\Sup$ is easily
  verified to be $\id_I$ if there is an $a\in A$ with $(a,a)\in R_f$
  and $0$ otherwise; thus we essentially recapture \autoref{eg:rel}.

  However, not all dualizable suplattices are power sets.  For
  instance, if $A$ is an Alexandrov topological space (one where
  arbitrary intersections of open sets are open), then its open-set
  lattice $\calO (A)$ is a dualizable suplattice.
  In this case, for a continuous map $f\maps A\to A$, the trace of
  $f^{-1}\maps \calO (A)\to \calO (A)$ is $\id_I$ if there is an $a\in A$
  such that $a\le f(a)$ in the specialization order, and $0$
  otherwise.  (Recall that the \emph{specialization order} of a
  topological space is defined so that $x\le y$ if and only if every open set
  containing $y$ also contains $x$.)
\end{eg}

\begin{eg}
  Continuing the analogy between \Sup\ and \Ab, it is natural to
  consider commutative monoid objects in \Sup\ as analogous to
  commutative rings.  A commutative monoid object in a symmetric
  monoidal category \bC\ is an object $R$ with morphisms $R\ten R\to
  R$ and $I\to R$ satisfying evident axioms.
  In particular, for any topological space $B$, the open-set lattice
  $\calO(B)$ is a commutative monoid object in $\Sup$ whose
  ``multiplication'' map is intersection $\cap\maps
  \calO(B)\ten\calO(B)\to\calO(B)$.  Likewise, for any continuous
  $f\maps A\to B$, we have a monoid homomorphism $f\inv\maps
  \calO(B)\to\calO(A)$.  In this way a category of suitably nice
  topological spaces can be identified with the opposite of a
  subcategory of commutative monoids in \Sup; see~\cite{jt:galois}
  and~\cite[Ch.~C1]{ptj:elephant2}.
  This is analogous to how the
  category of affine schemes can be identified with the opposite of
  the category of commutative rings.

  If \bC\ has coequalizers preserved
  on both sides by $\ten$, then for any commutative monoid object $R$ the category
  of $R$-modules in \bC\ is itself symmetric
  monoidal under the tensor product given by the usual coequalizer
  \[M\ten R\ten N \toto M\ten N \to M\ten_R N.\]
  In particular, this applies to
  $\calO(B)$-modules in \Sup\ for any space $B$.
  Given any continuous map $p\maps X\to
  B$, $\calO(X)$ becomes a $\calO(B)$-algebra (that is, there is a
  monoid homomorphism $p\inv\maps \calO(B)\to\calO(X)$) and thus a
  $\calO(B)$-module.  If $p$ is a local homeomorphism (that is, $X$ is
  the ``espace etale'' of a sheaf over $B$), then $\calO(X)$ is a
  dualizable $\calO(B)$-module that is its own dual: the evaluation is
  \begin{align*}
    \calO(X)\ten_{\calO(B)}\calO(X) &\too[\ep]\calO(B)\\
    (W,W') &\mapsto p(W\cap W')
  \end{align*}
  and the coevaluation is
  \begin{align*}
    \calO(B) &\too[\eta] \calO(X)\ten_{\calO(B)}\calO(X)\\
    U &\mapsto \bigvee_{p(W)\subseteq U \atop  \mathclap{p|_W \text{is a homeomorphism}}}    W \ten W.
  \end{align*}
  The unit $\calO(B)$-module is $\calO(B)$, and an $\calO(B)$-module
  map $\calO(B)\to\calO(B)$ is determined uniquely by where it sends
  $B\in \calO(B)$ (the unit for $\cap$).  Any map $f\maps X\to X$ over
  $B$ gives a map $f\inv\maps \calO(X)\to\calO(X)$ of
  $\calO(B)$-modules, whose trace is characterized by
  \[B\mapsto \setof{ b\in B |  \exists x\in p\inv(b) : f(x)=x };
  \]
  that is, the set of $b\in B$ such that $f|_{p\inv(b)}$ has a fixed
  point.  In particular, the Euler characteristic of a sheaf $X$ is
  its \emph{support} $p(X)\subseteq B$.  When $B$ is the one-point
  space, $X$ must be discrete, and we recapture power sets in
  $\Sup$.  For more on this point of view,
  see~\cite{rr:shvs-modules}.
\end{eg}

We have so far considered only \emph{symmetric} monoidal categories,
but the definitions we have given make sense with only a braiding, and
there are interesting examples which are not symmetric.

\begin{eg}
  Let $\mathbf{Tang}$ be the category of \emph{tangles}: its objects
  are natural numbers $0,1,2,\dots$, and its morphisms from $n$ to $m$
  are tangles from $n$ points to $m$ points.  A tangle is like a
  braid, except that strings can be turned around, so that $n$ need
  not equal $m$; see~\cite{fy:brd-cpt}.  $\mathbf{Tang}$ is braided
  but not symmetric monoidal; its product is disjoint union and its
  unit object is $0$.  Every object is its own dual, the endomorphisms
  of the unit are links, and the trace of an endo-tangle is its
  ``tangle closure'' into a link.  There are oriented and framed
  variants.
\end{eg}

However, the trace as we have defined it is not quite correct in the
non-symmetric case.  For instance, with our definitions, the trace of
the identity $\id_2$ in $\mathbf{Tang}$ is two \emph{linked} circles,
while the trace of the braiding $\fs_2$ is two \emph{unlinked}
circles; clearly it would make more sense for this to be the other way
round.  This can be remedied with the notion of a \emph{balanced}
monoidal category, which is a braided monoidal category in which each
object is equipped with a ``double-twist'' automorphism;
see~\cite{js:brd-tensor,jsv:traced-moncat}.  Symmetric monoidal
categories can be identified with balanced ones in which every
double-twist is the identity.  In a balanced monoidal category, we
define the trace of an endomorphism $f$ by including a double-twist
with $f$ in between \eta\ and \ep; this remedies the problem noted
above with $\mathbf{Tang}$.  For simplicity, however, in this paper we
will consider only the symmetric case.

\section{Twisted traces and transfers}
\label{sec:twist-trac-transf}

The examples in \S\ref{sec:examples} show that the canonical trace defined in \S\ref{sec:traces} does give useful information about fixed points, but usually it only indicates their presence or absence, or at best counts the number of them (with multiplicity).
However, in \S\ref{sec:fixed-points} we saw that in the presence of ``diagonals'', we could hope to extract not merely the number of fixed points, but the fixed points themselves.
The use of diagonals in this way turns out to be a special case of the following more general notion of ``twisted trace.''

\begin{defn}\label{def:twisted-comm-trace}
  Let \bC\ be a symmetric monoidal category, $M$ a dualizable object
  of $\bC$, and $f\maps Q\ten M\to M\ten P$ a morphism in $\bC$.  The
  \textbf{trace} $\tr(f)$ of $f$ is the following composite:
  \begin{equation}
    Q \too[\eta] Q\ten M\ten \rdual{M} \too[f] M \ten P \ten \rdual{M}
    \xiso{\fs} \rdual{M}\ten M\ten P \too[\ep] P
    \label{eq:twisted-comm-trace}
  \end{equation}
\end{defn}

This trace is displayed graphically in Figure~\ref{fig:smctrace}.

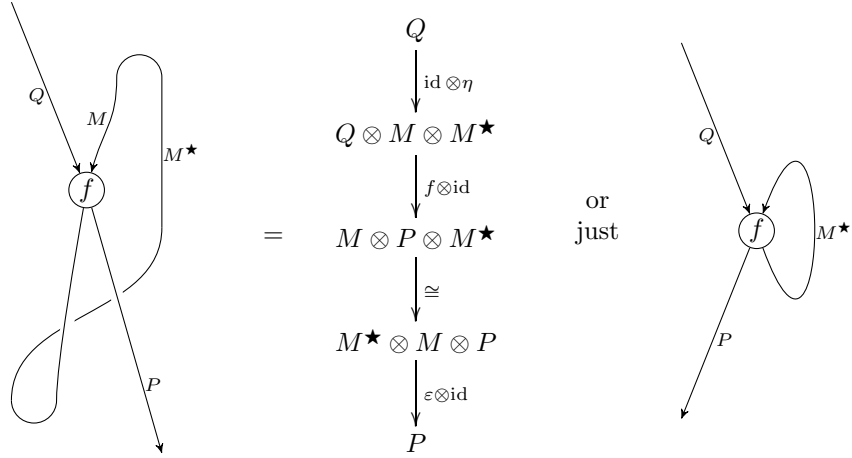
\begin{figure}[tb]
  \centering
  \begin{tabular}{m{30mm}m{5mm}m{25mm}m{15mm}m{20mm}}
    \begin{tikzpicture}
      \node[vert] (f) at (0,0) {$f$};
      \draw[<-] (f) -- node [ed] {$Q$} +(-1,2.5);
      \draw[->] (f) -- node [ed,near end] {$P$} +(1,-3.5) coordinate (p);
      \draw[<-] (f) to[rrel,out=75,in=-90] node [ed] {$M$} (0.4,1.5)
      arc (180:0:0.3)
      -- node [ed] {$\rdual{M}$} ++(0,-2) node[coordinate] (c) {};
      \begin{pgfonlayer}{background}
        \draw (c) to[rrel,out=-90,in=90] (-2,-2.3) node[coordinate] (d) {};
      \end{pgfonlayer}
      \draw (d) arc (-180:0:0.3) node[coordinate] (e) {}
      to[out=90,in=-100,looseness=0.5] (f);
      \begin{pgfonlayer}{background}
        \draw[white,line width=5pt] (f) -- (p);
        \draw[white,line width=5pt] (f) -- (e);
      \end{pgfonlayer}
    \end{tikzpicture}
    & $=$ &
    $\xymatrix{Q \ar[d]^{\id\ten \eta} \\ Q\ten M\ten \rdual{M} \ar[d]^{f\ten
        \id} \\ M\ten P \ten \rdual{M} \ar[d]^{\iso} \\ \rdual{M}\ten
      M\ten P \ar[d]^{\ep\ten \id} \\ P}$
    & \begin{center}or\\just\end{center} &
    \begin{tikzpicture}
      \node[vert] (f) at (0,0) {$f$};
      \draw[<-] (f) -- node [ed] {$Q$} +(-1,2.5);
      \draw[->] (f) -- node [ed] {$P$} +(-1,-2.5);
      \draw[->] (f) to[out=-70,in=70,looseness=15]
      node [ed,swap] {$\rdual{M}$} (f);
    \end{tikzpicture}
  \end{tabular}
  \caption{The ``twisted''  trace}
  \label{fig:smctrace}
\end{figure}

\begin{rmk}
Since \bC\ is symmetric, it may seem odd to write the domain of $f$ as $Q\ten M$ but its codomain as $M\ten P$.
Indeed, in the literature on symmetric monoidal traces it is more common to align the $M$'s on one side, as in the right-hand version of Figure~\ref{fig:smctrace}.
Our notation is chosen instead to match that of the bicategorical generalization presented in~\cite{PS2}, in which case the order we have chosen here is the only possibility.
\end{rmk}

Of course, when $Q=P=I$ is the unit object, this reduces to the previous notion of trace.
It is also cyclic, in a suitable sense.

\begin{lem}\label{thm:twisted-comm-cyclic}
  If $M$ and $N$ are dualizable and $f\maps Q\otimes M\to N\otimes P$
  and $g\maps K\otimes N\to M\otimes L$ are morphisms, then
  \[\tr\Big((g\otimes \id_P)(\id_K \otimes f)\Big)
  = \tr\Big(\fs(f\otimes \id_L) (\id_Q\otimes g)\fs\Big).
  \]
\end{lem}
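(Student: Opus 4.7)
My plan is to mirror the string-diagram proof of \autoref{thm:smc-ut-cyclicity} (cyclicity of the untwisted trace), carrying along the auxiliary strings $K$, $Q$, $L$, and $P$ as passive spectators. The two sides of the claimed equation both describe a single ``feedback loop'' in which $f$ and $g$ are attached in opposite cyclic orders; on the left the loop runs along $M$/$\rdual{M}$, while on the right it runs along $N$/$\rdual{N}$. The two occurrences of $\fs$ on the right-hand side encode precisely the reordering of the auxiliary strings forced by placing $N$ into trace-position.

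First, I would unfold both traces using \autoref{def:twisted-comm-trace}. On the left, the twisted trace is applied to $(g \ten \id_P)(\id_K \ten f) \maps K \ten Q \ten M \to M \ten L \ten P$, treating $M$ as the object being traced out; this yields a morphism $K \ten Q \to L \ten P$. On the right, the morphism $\fs (f \ten \id_L)(\id_Q \ten g) \fs \maps K \ten Q \ten N \to N \ten L \ten P$ (where the two symmetries bring $N$ into the leading factor of source and target) has its trace taken over $N$, also yielding a morphism $K \ten Q \to L \ten P$. I would then draw each of these as a string diagram, explicitly placing the $f$, $g$ boxes and the relevant cups, caps, and symmetries.

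Next, I would deform one diagram into the other by the same manipulation used in the proof of \autoref{thm:smc-ut-cyclicity} and displayed in \autoref{fig:smc-ut-cyclicity}. The key move is to insert an auxiliary dual pair along the feedback loop (equivalently, along the $N$-strand running between $f$ and $g$ in the left-hand diagram), use naturality of the symmetry to slide $f$ past $g$ around the loop, and then collapse via the triangle identities. The extra strings for $K$, $Q$, $L$, $P$ pass straight through, without interacting with any cups or caps, and with only those symmetries that become the two copies of $\fs$ on the right. By the Joyal--Street theorem on string diagrams, this topological deformation gives a valid chain of equalities in $\bC$.

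The main obstacle I expect is not conceptual but bookkeeping: I must check that the permutation of auxiliary strings produced by the slide matches precisely the one given by the two $\fs$'s in the statement, rather than some other composite of symmetries. Because $\bC$ is symmetric, coherence guarantees that any two such composites agree, but it still requires care to track which factors of $K \ten Q$ and $L \ten P$ end up in which positions as $f$ and $g$ are transported around the loop. No new ingredient beyond \autoref{thm:smc-ut-cyclicity} is needed.
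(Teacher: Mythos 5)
Your proposal is correct and matches the paper's approach: the paper offers no written proof beyond the string diagram of \autoref{fig:smc-cyclicity} and an invitation to compare with \autoref{fig:smc-ut-cyclicity}, i.e.\ it treats the lemma exactly as you do, as the untwisted cyclicity argument with the strings $K$, $Q$, $L$, $P$ carried along as spectators and the two symmetries accounting for their reordering. Your explicit attention to verifying that the resulting permutation of auxiliary strings is the one named in the statement is a reasonable extra check, resolved as you say by symmetric monoidal coherence.
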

The string diagram for this lemma is \autoref{fig:smc-cyclicity}, which should be compared to \autoref{fig:smc-ut-cyclicity}.
\begin{figure}[tb]
  \centering
  \begin{tikzpicture}
    \node[vert] (f) at (0,0) {$f$};
    \draw[<-] (f) -- node [ed] {$Q$} +(-1,2);
    \draw[->] (f) -- node [ed,near end] {$P$} +(1.5,-4.5);
    \node[vert2] (g) at (-0.5,-1.5) {$g$};
    \draw[<-] (g) -- node [ed] {$K$} +(-1.3,3.5);
    \draw[->] (g) -- node [ed,near end] {$L$} +(1,-3);
    \draw[->] (f) -- node [ed] {$N$} (g);
    \draw[<-] (f) to[rrel,out=75,in=-90] node [ed] {$M$} (0.4,1.5)
    arc (180:0:0.3)
    -- node [ed] {$\rdual{M}$} ++(0,-3)
    to[rrel,out=-90,in=90] (-3,-2.3)
    coordinate[label={[ed]left:$\rdual{M}$}] arc (-180:0:0.3)
    coordinate[label={[ed]right:$M$}] to[out=90,in=-100] (g);
  \end{tikzpicture}
  \qquad\raisebox{3cm}{=}\qquad
  \begin{tikzpicture}
    \node[vert2] (g) at (0,-0.3) {$g$};
    \draw[<-] (g) to[rrel,out=110,in=-80] node [ed,swap] {$K$} (-1.5,2.2);
    \draw[->] (g) to[rrel,out=-70,in=90] node [ed,near end] {$L$} (1,-2.5)
    to[rrel,out=-90,in=80] (-1,-1.5);
    \node[vert] (f) at (-0.5,-1.5) {$f$};
    \draw[<-] (f) to[rrel,out=110,in=-90] node [ed] {$Q$} (-1,2)
    to[rrel,out=90,in=-110] (0.75,1.4);
    \draw[->] (f) to[rrel,out=-70,in=110] node [ed,swap] {$P$} (0.5,-1.5)
    to[rrel,out=-70,in=100] (1,-1.3);
    \draw[->] (g) -- node [ed] {$M$} (f);
    \draw[<-] (g) to[rrel,out=75,in=-90] node [ed] {$N$} (0.4,1)
    arc (180:0:0.3)
    -- node [ed] {$\rdual{N}$} ++(0,-2) 
    to[rrel,out=-90,in=90] (-3,-1.8)
    coordinate[label={[ed]left:$\rdual{N}$}] arc (-180:0:0.3)
    coordinate[label={[ed]right:$N$}] to[out=90,in=-100] (f);
  \end{tikzpicture}
  \caption{Cyclicity of the ``twisted'' trace}
  \label{fig:smc-cyclicity}
\end{figure}
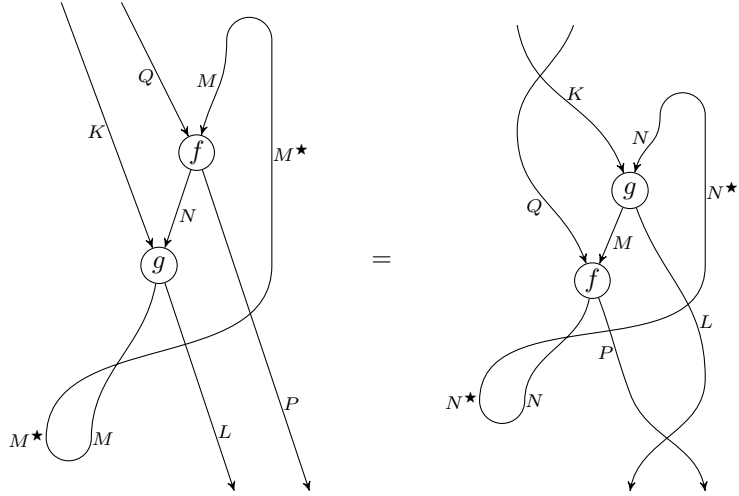

As promised, the trace using a ``diagonal morphism'' is a special case of the general notion of twisting.

\begin{defn}\label{defn:transfer}
  Let $M\in\bC$ be a dualizable object equipped with a ``diagonal''
  morphism $\Delta\maps M\to M\ten M$, and let $f\maps M\to M$ be an
  endomorphism of $M$.  Then the \textbf{trace of $f$
    with respect to $\Delta$} is the trace of $\Delta\circ f\maps M\to
  M\ten M$.  The trace of $\id_M$ with respect to $\Delta$ is called
  the \textbf{transfer} of $M$.
\end{defn}

The trace of $f$ with respect to $\Delta$ is a morphism $I\to M$; by
cyclicity it is also equal to the trace of $(f\ten \id)\circ \Delta$.

Where do such diagonals come from?  Of course, if \bC\ is
\emph{cartesian} monoidal, then any object $M$ has such a diagonal,
but we have seen (\autoref{eg:cart-dual}) that in this case there are
few dualizable objects.
However, we have also seen that for many of the traces we are interested in, the dualizable objects are in the image of a symmetric monoidal functor whose domain \emph{is} cartesian monoidal, and such a functor preserves the existence of diagonals.
That is, if $X$ is an object of a cartesian monoidal category $\bC$ and $Z$ a symmetric monoidal functor with domain $\bC$, then the diagonal $X\to X\times X$ gives rise to a diagonal $Z(X) \to Z(X) \otimes Z(X)$.
This is usually the source of ``diagonals'' in examples.

\begin{eg}\label{eg:freeabgp}
  Let $\bS=\Set$, $\bC=\Ab$, and $Z$ be the
  free abelian group functor $\bbZ[-]$.  In this case the induced
  diagonal $\bbZ[X]\to \bbZ[X]\ten \bbZ[X]$ sends a generator $x$ to
  $x\ten x$.  If $X$ is finite, so that $\bbZ[X]$ is dualizable, the
  trace of $\bbZ[f]$ with respect to this diagonal is $\sum_{f(x)=x}
  x\in\bbZ[X]$.  In particular, the transfer of $\bbZ[X]$ is
  $\sum_{x\in X} x\in\bbZ[X]$.

  Note that while the ordinary trace of $\bbZ[f]$ records only the
  \emph{number} of fixed points of $f$, its trace with respect to
  $\Delta$ records \emph{what} those fixed points are (as elements of
  $\bbZ[X]$).
\end{eg}

\begin{eg}\label{eg:topspec}  As a more sophisticated version of the previous example, 
  let $\bS=\Top$ and let $\bC=\Ho(\Sp)$ be the stable homotopy category.
  Since $\Top$ is cartesian monoidal and the suspension
  spectrum functor $\Sip$ is strong monoidal, the
  diagonal $M\to M\times M$ of any space induces a diagonal
  \[\Delta \maps \Sip (M)\to \Sip (M) \wedge \Sip (M).\]
  Thus, when $M$ is $n$-dualizable, we can define traces and transfers
  with respect to $\Delta$.  This example is the original use of the
  term \emph{transfer}.  In this case, the transfer of an
  $n$-dualizable space $M$ is a map $S\to \Sip(M)$, which
  is by definition an element of $\pi_0^s(M_+)$, the
  $0^{th}$ stable homotopy group of $M$.  If $M$ is connected, there
  is an isomorphism
  \[\pi_0^s(M_+)\cong H_0(M_+)
  \]
  under which the image of the transfer
  is $\chi(M)$ times the generator of $H_0(M_+)$; see
  \cite[III.8.4]{lms:equivariant}.  More generally, if $f$ is an
  endomorphism of $M$, then the trace of $f$ with respect to $\Delta$
  is the fixed point transfer defined by Dold in \cite{d:transfer}.
  We have the same intuition as for the previous example; while the
  fixed point index only \emph{counts} the fixed points, the fixed
  point transfer \emph{records} them.

  There are also equivariant and parametrized transfers.  For example,
  the Becker-Gotleib transfer~\cite{becker} is the parametrized
  transfer of a fibration with compact manifold fibers.
\end{eg}

\begin{eg}
  Recall from \autoref{eg:rel} that we also have a functor
  $\Set\to\bRel$ which is the identity on objects and sends a function
  $f$ to its graph $\Gamma_f$.  In this case, for any endofunction
  $f\maps M\to M$, the trace of $\Gamma_f$ with respect to
  $\Gamma_\Delta$ is the set of all fixed points of $f$, regarded as a
  relation from $\star$ to $M$.  In particular, the transfer of $M$ is
  $M$ itself so regarded.
\end{eg}

\begin{eg}
  Likewise, if $\Sigma$ is the free suplattice functor $\calP\maps
  \Set\to\Sup$ from \autoref{eg:sup}, then for any endofunction
  $f\maps M\to M$ the trace of $\calP(f)$ with respect to
  $\calP(\Delta)$ is also the set of fixed points of $f$, now regarded
  as an element of $\calP(M)$.
\end{eg}

Twisted traces not arising from diagonals are less common, but they do
occur.

\begin{eg}\label{eg:source-twisting-ab}
  Let $f\maps Q\times M\to M$ be a function between sets, where $M$ is
  finite.  Then the trace of $\bbZ[f]\colon \bbZ[Q] \otimes \bbZ[M]
  \to \bbZ[M]$ in \Ab\ is the homomorphism $\bbZ[Q] \to \bbZ$ which
  sends each generator $q\in Q$ to the number of fixed points of
  $f(q,-)$.

  We can also combine this with a transfer, by considering
  $(f,\mathrm{pr}_2)\colon Q\times M \to M\times M$.  This induces a
  map $\bbZ[Q] \otimes \bbZ[M] \to \bbZ[M] \otimes \bbZ[M]$, whose
  trace $\bbZ[Q] \to \bbZ[M]$ sends each generator $q\in Q$ to the sum
  of the fixed points of $f(q,-)$.

  Similarly, for any set $M$ and any function $f\maps Q\times M\to M$,
  the trace of $\Gamma_f$ in $\mathbf{Rel}$ is the relation from $Q$
  to $*$ defined by
  \[\tr(\Gamma_f) = \setof{q\in Q | f(q,-) \text{ has a fixed point}},\]
  and the trace of the induced relation $Q\times M \to M\times M$ is
  \[ \setof{(q,m) \in Q\times M | m \text{ is a fixed point of } f(q,-) }. \]
\end{eg}

\begin{eg}\label{eg:twisted-top}
If $f\colon Q\times M\rightarrow M$ is a continuous map of topological 
spaces and $\Sigma_+^\infty M$ is dualizable, the trace of $f$ is the 
stable homotopy class of a map 
\[Q\rightarrow S^0.\] 
Using explicit descriptions of the coevaluation and evaluation for 
$\Sigma_+^\infty M$, it is not difficult to verify this stable map
is homotopically trivial if the set 
\[\{(q,m)\in Q\times M|f(q,m)=m\}\] is empty.

Let $\pi\colon Q\times M\rightarrow Q$ be the first coordinate projection.
Note that $f$ determines a fiberwise map $F\colon Q\times M\rightarrow 
Q\times M$ by $F(q, m)=(q,f(q,m))$.  The trace of $F$ as described 
in \autoref{eg:parm-duality} coincides with the trace of $f$ under a
corresponding comparison  
of fiberwise stable endomorphism of the unit object over $Q$ with the 
stable maps from $Q$ to $S^0$.  

This trace is related to the \emph{higher Euler characteristics} in \cite{gn:higher}.
\end{eg}

\begin{eg}
  For any topological space $A$ we have an ``intersection'' morphism
  $\cap\maps \calO(A)\ten\calO(A)\to\calO(A)$ in $\Sup$.  If
  $A$ is moreover Alexandrov, so that $\calO(A)$ is dualizable in
  $\Sup$, then for any $f\maps A\to A$, the trace of $f^{-1}
  \circ \cap$ is the function $\calO(A) \to I$ that takes $U\subset A$
  to $1$ if $U$ contains a point $x$ with $x\le f(x)$ and to $0$
  otherwise.  If we identify a suplattice map $g\maps \calO(A) \to I$
  with the closed subset
  \[\bigcap_{K \text{ closed}\atop g(A\setminus
    K)=0} K
  \]
  (which determines it), then the trace of $f^{-1} \circ \cap$ is
  identified with the closure of
  \[\setof{a\in A | a\le f(a)}.\]

  On the other hand, if $B$ is another Alexandrov space with a map
  $m\maps A\times B\to A$, then we have an induced suplattice map
  $m^{-1}\maps \calO(A) \to \calO(A\times B)\iso
  \calO(A)\ten\calO(B)$.  Its trace is the suplattice map $I\to
  \calO(B)$ which takes 1 to the open set
  \[\setof{b\in B | (\exists a\in A)(a\le m(a,b))}.\]
\end{eg}

\begin{eg}
  If $p\maps X\to B$ is any local homeomorphism, then regarding
  $\calO(X)$ as a $\calO(B)$-module we again have an ``intersection''
  morphism $\cap\maps \calO(X)\ten_{\calO(B)}\calO(X)\to\calO(X)$ (corresponding
  to the diagonal $X\to X\times_B X$).  For any $f\maps X\to X$ over
  $B$, the trace of $f\inv\circ \cap$ is the $\calO(B)$-module map
  $\calO(X)\to\calO(B)$ that sends $V\in\calO(X)$ to
  \[\setof{ b\in B |  \exists x\in p\inv(b) \cap V : f(x)=x }.\]
  In particular, the trace of $\cap$ itself sends each $V\in\calO(X)$
  to its support.

  On the other hand, if $q\maps Y\to B$ is another local homeomorphism
  and $f\maps X\times_B Y\to X$ is a map over $B$, then the trace of
  $f\inv\maps \calO(X) \to \calO(X)\ten_{\calO(B)} \calO(Y)$ is the
  map $\calO(B)\to\calO(Y)$ sending the unit $B$ to the open set
  \[\setof{y\in Y | \exists x\in p\inv(q(y)) : f(x,y)=x}.\]
\end{eg}

\section{Properties of traces}
\label{sec:prop-trace-smc}

In addition to cyclicity, the (twisted) symmetric monoidal trace
satisfies many useful naturality properties which we summarize here.
We omit most proofs, which are straightforward diagram chases
(and are especially easy in string diagram notation).
References
include~\cite{dp:duality,lms:equivariant,may:traces,jsv:traced-moncat}.

We begin with invariance under dualization.
Recall that any $f\maps Q\ten M \to M\ten P$ has a mate $\rdual{f}\colon Q\ten \rdual{M} \to \rdual{M}\ten P$, and since $\rdual{M}$ is also dualizable (with dual $M$), after composing with symmetry isomorphisms on either side we can take the trace of $\rdual{f}$ as well.

\begin{prop}
  If $M$ is dualizable and $f\maps Q\ten M \to M\ten P$ is any
  morphism, then $\tr(f) = \tr(\fs\rdual{f}\fs)$.
\end{prop}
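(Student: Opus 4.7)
The plan is to verify the identity by string-diagrammatic reasoning, invoking the Joyal--Street topological invariance theorem together with the triangle identities from \autoref{fig:triangle} (``bent strings can be straightened'').

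First, I would draw the diagram for $\tr(f)$ as in \autoref{fig:smctrace}: a node $f$ with a $Q$-leg in and a $P$-leg out, together with a closed arc labeled by $M$ on one side and $\rdual{M}$ on the other, formed by $\eta$ at the top and $\ep$ at the bottom, that joins the $M$-input and $M$-output of $f$. Next, I would unfold the mate: by the definition given in the text, the diagram for $\rdual{f}$ is obtained from that of $f$ by using an instance of $\eta$ to bend the $M$-input around into an $\rdual{M}$-input on the opposite side, and using an instance of $\ep$ to bend the $M$-output around into an $\rdual{M}$-output on the opposite side. The flanking symmetries $\fs$ on either side of $\rdual{f}$ simply permute the resulting four legs so that $\fs\rdual{f}\fs$ is presented as a morphism $Q\ten\rdual{M}\to\rdual{M}\ten P$ in the form demanded by \autoref{def:twisted-comm-trace}.

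Second, I would substitute this into the definition of $\tr(\fs\rdual{f}\fs)$. Tracing requires the dual pair structure on $\rdual{M}$, whose dual is $M$, with evaluation and coevaluation obtained from $\ep$ and $\eta$ (up to precomposition with a symmetry). This inserts a \emph{second} pair of bending arcs, one on each of the two $\rdual{M}$-legs of $\fs\rdual{f}\fs$; combined with the two bending arcs coming from the mate, the upshot is that each of the two original $M$-legs of $f$ has been bent twice.

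Finally, the triangle identities straighten each of these doubly-bent strings, and any remaining symmetries cancel by topological invariance, producing precisely the diagram for $\tr(f)$. The main obstacle is notational bookkeeping: one must check that the dual pair structure on $\rdual{M}$ arising from $(\eta,\ep)$ interacts correctly with the two symmetries $\fs$ flanking $\rdual{f}$, so that the two triangle identities actually apply where predicted. Granted the string-diagrammatic picture, however, the reduction is a planar deformation and no further calculation is required; equivalently, one can grind this out algebraically as a sequence of triangle-identity applications interlaced with naturality of the symmetry.
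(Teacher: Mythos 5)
Your diagram chase is correct and is exactly the ``straightforward diagram chase'' the paper has in mind: the paper states this proposition without proof, remarking only that the omitted proofs of these properties are easy in string-diagram notation, and your reduction (unfold the mate, close it up using the dual pair $(\rdual{M},M)$ whose structure maps are $\eta$ and $\ep$ composed with symmetries, then cancel the extra cup--cap pair by the triangle identities and naturality of $\fs$) is the standard one. One small bookkeeping slip that does not affect the argument: in the mate $\rdual{f}$ as defined in the paper, the $\eta$ attached to the $M$-input of $f$ produces the $\rdual{M}$-\emph{output} of $\rdual{f}$, while the $\ep$ attached to the $M$-output of $f$ consumes the $\rdual{M}$-\emph{input}, i.e.\ the roles are opposite to the ones you describe.
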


In the simple case of square matrices in $\Ab$ or $\bMod R$, this says that the trace of a matrix is equal to the trace of its transpose.

Next we have a naturality property.

\begin{prop}\label{thm:tightening}
  Let $M$ be dualizable, let $f\maps Q\ten M \to M\ten P$ be a map,
  and let $g\maps Q'\to Q$ and $h\maps P\to P'$ be two maps.  Then
  \[h\circ \tr(f)\circ g = \tr\big((\id_M\ten h)\circ f \circ (g\ten \id_M)\big).\]
  In other words, the function
  \[\tr\maps \bC(Q\ten M, M\ten P) \too \bC(Q,P)\]
  is natural in $Q$ and $P$.
\end{prop}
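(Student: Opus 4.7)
The plan is to unfold both sides according to \autoref{def:twisted-comm-trace} and show they agree by repeated application of functoriality of $\otimes$ and naturality of the symmetry $\fs$. The only nontrivial data in $\tr(f)$ are the coevaluation $\eta\maps I\to M\ten\rdual{M}$, the map $f$ itself, the symmetry, and the evaluation $\ep\maps\rdual{M}\ten M\to I$. None of these interacts with the objects $Q$ or $P$, so $g$ and $h$ should simply slide along the $Q$- and $P$-strands until they land next to $f$.

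Concretely, I would begin by writing $h\circ\tr(f)\circ g$ as the composite
\[
Q' \xto{g} Q \xto{\id_Q\ten\eta} Q\ten M\ten\rdual{M} \xto{f\ten\id_{\rdual{M}}} M\ten P\ten\rdual{M}
\xiso{\fs} \rdual{M}\ten M\ten P \xto{\ep\ten\id_P} P \xto{h} P'.
\]
For the left end, the bifunctoriality of $\ten$ gives
$(\id_Q\ten\eta)\circ g = (g\ten\id_{M\ten\rdual{M}})\circ(\id_{Q'}\ten\eta)$,
and a second application of bifunctoriality combines $g\ten\id$ with $f\ten\id_{\rdual{M}}$ into $(f\circ(g\ten\id_M))\ten\id_{\rdual{M}}$. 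For the right end, bifunctoriality gives
$h\circ(\ep\ten\id_P) = (\ep\ten\id_{P'})\circ(\id_{\rdual{M}\ten M}\ten h)$,
then naturality of $\fs$ yields
$(\id_{\rdual{M}\ten M}\ten h)\circ\fs = \fs\circ(\id_M\ten h\ten\id_{\rdual{M}})$,
and a final application of bifunctoriality combines $\id_M\ten h\ten\id_{\rdual{M}}$ with $f\ten\id_{\rdual{M}}$ into $((\id_M\ten h)\circ f)\ten\id_{\rdual{M}}$.

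Putting these rewrites together, the composite becomes
\[
Q' \xto{\id_{Q'}\ten\eta} Q'\ten M\ten\rdual{M}
\xto{((\id_M\ten h)\circ f\circ(g\ten\id_M))\ten\id_{\rdual{M}}}
M\ten P'\ten\rdual{M} \xiso{\fs} \rdual{M}\ten M\ten P' \xto{\ep\ten\id_{P'}} P',
\]
which is precisely the definition of $\tr\bigl((\id_M\ten h)\circ f\circ(g\ten\id_M)\bigr)$, as desired.

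There is no real obstacle here: the argument is just pushing $g$ and $h$ along parallel strands that are disjoint from the $M/\rdual{M}$ loop. In string diagrams it is immediate, since attaching boxes $g$ and $h$ at the ends of the $Q$- and $P$-strands in \autoref{fig:smctrace} is visibly the same as attaching them just below and above the $f$-box without crossing the feedback loop. The only thing to be careful about in the equation form is keeping track of which copy of $\id$ absorbs each map when applying bifunctoriality, and invoking naturality of $\fs$ at exactly the right spot; everything else is bookkeeping.
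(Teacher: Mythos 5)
Your proof is correct and is exactly the ``straightforward diagram chase'' that the paper alludes to but omits for this proposition: unfold the definition of the twisted trace, then use bifunctoriality of $\ten$ and naturality of the symmetry to slide $g$ and $h$ along the $Q$- and $P$-strands until they absorb into $f$. Each interchange step you perform checks out, so nothing further is needed.
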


For example, recall from \autoref{eg:source-twisting-ab} that for a function $f\maps Q\times M \to M$, the trace of the induced map in $\Ab$ is the map $\tr(f)\maps \bbZ[Q] \to \bbZ$ which sends each $q\in Q$ to the number of fixed points of $f(q,-)$.
In this case, naturality in $Q$ means that given $g\maps Q'\to Q$, composing $\tr(f)$ with $\bbZ[g] \maps \bbZ[Q'] \to \bbZ[Q]$ counts the number of fixed points of $f(g(q'),-)$.

\autoref{thm:tightening} also implies that quite generally, traces ``calculate fixed points'', as described informally in \S\ref{sec:fixed-points}.

\begin{cor}[Fixed point property]\label{thm:fixedpoint}
  If $M$ is dualizable, $\Delta\maps M\to M\ten P$ is a map,
  $f\maps M\to M$ is an endomorphism, and $h\maps P\to P$ is such that
  $(f\ten h)\Delta = \Delta f$, then \[h \circ \tr(\Delta f) = \tr(\Delta f).\]
\end{cor}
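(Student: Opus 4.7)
The plan is to chain three equalities: apply naturality to push $h$ inside the trace, apply cyclicity to move $f$ around the traced loop, and then invoke the equivariance hypothesis to absorb the extra copy of $h$.

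Concretely, first use \autoref{thm:tightening} (with pre-composition by $\id_I$) to rewrite
\[h\circ\tr(\Delta f) \;=\; \tr\bigl((\id_M\ten h)\circ\Delta\circ f\bigr).\]
Next, invoke cyclicity of the twisted trace (\autoref{thm:twisted-comm-cyclic}) applied to the pair $(\id_M\ten h)\Delta\maps M\to M\ten P$ and $f\maps M\to M$. Since $f$ contributes only trivial ``$K$'' and ``$L$'' parameters in the notation of that lemma (everything but the two $M$'s in its source and target is the unit), the symmetry isomorphisms $\fs$ in the cyclicity identity collapse to identities, and the lemma reduces to
\[\tr\bigl((\id_M\ten h)\Delta\circ f\bigr) \;=\; \tr\bigl((f\ten\id_P)\circ(\id_M\ten h)\circ\Delta\bigr) \;=\; \tr\bigl((f\ten h)\circ\Delta\bigr).\]
Finally, substitute the hypothesis $(f\ten h)\Delta=\Delta f$ into the last expression to obtain $\tr\bigl((f\ten h)\Delta\bigr)=\tr(\Delta f)$. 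Composing the three equalities yields $h\circ\tr(\Delta f)=\tr(\Delta f)$.

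The only step that really requires care is the middle one: one must verify that, when \autoref{thm:twisted-comm-cyclic} is specialized with $Q=K=L=I$, the $\fs$'s in its general statement genuinely disappear, so that the identity reduces to the clean ``swap'' used above. String-diagrammatically the whole proof is transparent: one slides the $h$-box once around the traced $P$-loop until it meets $\Delta$, at which point equivariance turns $(f\ten h)\Delta$ back into $\Delta f$ and the extra $h$ is swallowed.
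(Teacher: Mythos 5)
Your proof is correct and follows exactly the route the paper intends: the corollary is presented as a consequence of the naturality property (\autoref{thm:tightening}), combined with cyclicity of the twisted trace (\autoref{thm:twisted-comm-cyclic}) and the equivariance hypothesis $(f\ten h)\Delta=\Delta f$, in precisely the order you use; the paper itself omits the diagram chase as routine. Your check that the symmetry isomorphisms in the cyclicity lemma degenerate to unit isomorphisms when $Q=K=L=I$ is the right detail to verify, and it goes through.
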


In the situation discussed after \autoref{defn:transfer} of a symmetric monoidal functor $Z\colon \bC\to \bS$, with $\bC$ a cartesian monoidal category,
for any morphism $f\maps M\to M$ in $\bC$ we have that $(f\times f)\Delta = \Delta f$.
Applying the above corollary in $\bS$ then implies $Z(f)\circ \tr(Z(\Delta f))=\tr(Z(\Delta f))$.

Note that $\tr(Z(\Delta f))$ is a map $I\to Z(M)$ in $\bS$, and not in the original category $\bC$.
However, we can see the connections to fixed points explicitly in a couple examples.
If we consider the example where $\bC=\Set$, $\bS=\Ab$, and $Z$ is 
the free abelian group functor, then $\tr(\Delta f)=\sum_{f(x)=x} x$ as computed in \autoref{eg:freeabgp}, and \autoref{thm:fixedpoint} implies that this element of $\bbZ[M]$ is fixed by $\bbZ[f]$.
The corresponding example $\Sigma\colon \Top\to \Sp$ in \autoref{eg:topspec} behaves similarly.

The next few properties are of more technical interest.
An additional naturality property follows directly from cyclicity.

\begin{prop}\label{thm:sliding}
  Let $M$ and $N$ be dualizable and let $f\maps Q\ten M \to N\ten P$
  and $h\maps N\to M$ be maps.  Then
  \[\tr((h\ten \id_P)f) = \tr(f(\id_Q\ten h)).\]
\end{prop}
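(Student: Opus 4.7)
The plan is to derive this as an immediate special case of the cyclicity of the twisted trace (\autoref{thm:twisted-comm-cyclic}), taking the second morphism there to be $h$ itself. Intuitively, both sides of the desired equation arise by cutting the diagram
\begin{tikzcenter}
  \node[vert] (f) at (0,0) {$f$};
  \node[vert2] (h) at (0,-1.5) {$h$};
  \draw[<-] (f) -- +(-0.7,1.2);
  \draw[->] (f) -- +(0.7,-3);
  \draw[->] (f) -- node [ed] {$N$} (h);
  \draw[->] (h) -- +(-0.7,-1.5);
\end{tikzcenter}
at different ``latitudes'' along the feedback loop: cutting above $h$ traces over $M$ and produces $(h\ten\id_P)f$, while cutting below $h$ traces over $N$ and produces $f(\id_Q\ten h)$.

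First I would instantiate \autoref{thm:twisted-comm-cyclic} with the given $f\maps Q\ten M\to N\ten P$ and with $g = h\maps I\ten N\to M\ten I$, so that $K=L=I$. Substituting into the lemma yields
\[
\tr\big((h\ten \id_P)(\id_I\ten f)\big) \;=\; \tr\big(\fs(f\ten \id_I)(\id_Q\ten h)\fs\big).
\]
Next I would simplify both sides. On the left, $\id_I\ten f = f$ and the resulting composite $(h\ten \id_P)f\maps Q\ten M\to M\ten P$ is exactly the map whose trace (over $M$) appears in the left-hand side of the proposition. On the right, the symmetry isomorphisms $\fs$ attached to the unit factor $L=I$ are identities, so the expression collapses to $f(\id_Q\ten h)\maps Q\ten N\to N\ten P$, whose trace (now over $N$) is the right-hand side of the proposition.

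The only real step here is choosing the correct dictionary between the indexing objects $K,L$ of the cyclicity lemma and the unit object $I$ so that the $\fs$'s become trivial; no genuine obstacle arises, since the proposition is essentially just \autoref{thm:twisted-comm-cyclic} read in a particular degenerate case. If one prefers a more self-contained derivation, the identity is also transparent from the string-diagram picture above, where the box labeled $h$ can be slid freely around the loop formed by the coevaluation, $f$, and the evaluation; the topological invariance of string diagrams then gives the equality directly.
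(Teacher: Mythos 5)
Your proof is correct and matches the paper's own route: the paper introduces this proposition with the remark that it ``follows directly from cyclicity,'' i.e.\ it is exactly the degenerate case $K=L=I$, $g=h$ of Lemma~\ref{thm:twisted-comm-cyclic}, which is precisely the instantiation you carry out. The type-checking and the collapse of the symmetry isomorphisms on the unit factors are handled correctly, so nothing is missing.
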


In fancier language, this says that the function
\[\tr\maps \bC(Q\ten M, M\ten P) \too \bC(Q,P)\]
is ``extraordinary-natural'' (see~\cite{ek:gen-funct-calc}) in the dualizable object $M$.

We now consider compatibility of traces with the monoidal structure.
Note that the unit $I$ is always dualizable with $\rdual{I}=I$.

\begin{prop}\label{thm:vanishing-1}
  If $f\maps Q\ten I\rightarrow I\ten P$ is a morphism in $\bC$, then
  $\tr(f)=f$ (modulo unit isomorphisms).
\end{prop}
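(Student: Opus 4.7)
The key observation is that the unit object $I$ is its own dual in a particularly trivial way: we may take $\rdual{I}=I$ and both the coevaluation $\eta\maps I\to I\ten I$ and the evaluation $\ep\maps I\ten I\to I$ to be the unit isomorphism (or its inverse). The triangle identities then hold by coherence. So my plan is to substitute $M=I$ into the defining composite~\eqref{eq:twisted-comm-trace} and then observe that every piece aside from $f$ itself is a coherence isomorphism.

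Concretely, setting $M=\rdual{M}=I$ in \autoref{def:twisted-comm-trace}, the trace $\tr(f)$ is the composite
\[ Q \too[\id\ten\eta] Q\ten I\ten I \too[f\ten\id] I\ten P\ten I \xiso{\fs} I\ten I\ten P \too[\ep\ten\id] P,\]
where $\eta$ and $\ep$ are the unit isomorphisms, and the symmetry $\fs$ only shuffles copies of $I$ past $P$. The first and last arrows are thus unit isomorphisms $Q\iso Q\ten I\ten I$ and $I\ten I\ten P\iso P$, while the middle symmetry is also a unit/coherence isomorphism. What remains, after identifying $Q\ten I$ with $Q$ and $I\ten P$ with $P$, is exactly $f$.

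The cleanest way to package this is the string-diagram picture: strings labeled by $I$ are omitted from string diagrams, so the cap and cup attached to the $M$-loop in \autoref{fig:smctrace} simply disappear when $M=I$, leaving only the box labeled $f$ with its $Q$ input and $P$ output. Invariance of string-diagram value under deformation (the theorem of Joyal--Street invoked after \autoref{fig:moncat-dual}) then gives $\tr(f)=f$ on the nose in any strict model, and hence up to unit isomorphisms in general.

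The only ``obstacle'' is bookkeeping of the unit and associativity isomorphisms, and this is precisely what the coherence theorem for symmetric monoidal categories takes care of; alternatively, one may simply work in a strict monoidal category (as the excerpt does throughout) and conclude $\tr(f)=f$ literally. No further ingredients beyond the definition of trace and the coherence theorem are required.
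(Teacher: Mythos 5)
Your proof is correct and is precisely the ``straightforward diagram chase'' that the paper omits for this proposition: taking $\rdual{I}=I$ with $\eta$ and $\ep$ the unit isomorphisms (a legitimate choice, since the trace does not depend on the choice of dual pair) reduces the composite in \autoref{def:twisted-comm-trace} to $f$ conjugated by coherence isomorphisms, which the coherence theorem disposes of. No further comment is needed.
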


If $M$ and $N$ are dualizable, then so is $M\ten N$, with dual
$\rdual{M}\ten \rdual{N}$.  In this case, if we have a map
\[f\maps Q\ten N\ten M \too M\ten N\ten P,
\]
we can either take the trace of $f\fs$ with respect to $M\ten N$, or
we can first take the trace of $f$ with respect to $M$ and then with
respect to $N$; either way results in a map $Q\to P$.

\begin{prop}\label{thm:vanishing-2}
  In the above situation, we have $\tr(f\fs) = \tr(\tr(f))$.
\end{prop}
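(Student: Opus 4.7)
The plan is to expand both sides via Definition~\ref{def:twisted-comm-trace} and verify their equality as string diagrams, appealing to the Joyal--Street coherence theorem mentioned before \autoref{thm:smc-ut-cyclicity}. The essential content is that the coevaluation and evaluation witnessing dualizability of $M\ten N$ may be built out of those for $M$ and $N$.

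Concretely, I would first show that if $M$ and $N$ are dualizable with data $(\rdual{M},\eta_M,\ep_M)$ and $(\rdual{N},\eta_N,\ep_N)$, then $M\ten N$ is dualizable with dual $\rdual{M}\ten \rdual{N}$, where
\[
\eta_{M\ten N} \;=\; (\id_M\ten \id_N\ten \fs)\circ(\id_M\ten \eta_N\ten \id_{\rdual{M}})\circ \eta_M
\]
and
\[
\ep_{M\ten N} \;=\; \ep_N\circ(\id_{\rdual{N}}\ten \ep_M\ten \id_N)\circ(\fs\ten \id_M\ten \id_N).
\]
The triangle identities for this pair reduce to those for $(M,\rdual{M})$ and $(N,\rdual{N})$ by a straightforward diagram chase, which is especially clear topologically since the picture consists of two nested bent strings, each of which straightens independently.

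Next I would substitute these formulas into the definition of $\tr(f\fs)$, viewing $f\fs$ as a morphism $Q\ten(M\ten N)\to(M\ten N)\ten P$. The symmetry $\fs\colon M\ten N\to N\ten M$ appearing inside $f\fs$ cancels, up to naturality of the braiding, against the symmetry appearing in $\eta_{M\ten N}$ (or symmetrically against the one in $\ep_{M\ten N}$). After this cancellation, the resulting string diagram has $f$ in the middle with two nested closed loops: an inner loop through $\rdual{M}$ closed off by $\eta_M$ and $\ep_M$, and an outer loop through $\rdual{N}$ closed off by $\eta_N$ and $\ep_N$. This is precisely the string diagram obtained by first forming $\tr_M(f)\colon Q\ten N\to N\ten P$ (the inner loop) and then tracing the result with respect to $N$ (the outer loop); that is, it is $\tr(\tr(f))$.

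The main obstacle is purely bookkeeping: the two sides appear to treat $M$ and $N$ asymmetrically, and writing the intermediate algebraic expressions requires a careful forest of symmetry isomorphisms. Once one works in string diagrams, however, the equality is evident from the fact that the two diagrams differ only by a planar isotopy that rearranges the order in which the loops are drawn, which is legitimate by the Joyal--Street theorem. An alternative, purely algebraic route is to derive the statement from \autoref{thm:tightening} and \autoref{thm:sliding} together with the explicit formulas for $\eta_{M\ten N}$ and $\ep_{M\ten N}$, but this essentially just transcribes the diagrammatic argument.
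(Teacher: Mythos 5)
Your proof is correct, and it is essentially the argument the paper has in mind: the paper omits the proof of this proposition, remarking only that such properties are ``straightforward diagram chases (and are especially easy in string diagram notation),'' which is exactly the string-diagram verification you carry out using the standard dual pair on $M\ten N$. The formulas you give for $\eta_{M\ten N}$ and $\ep_{M\ten N}$ are the standard ones and the two diagrams do have the same underlying connectivity, so the appeal to the Joyal--Street invariance under deformation closes the argument.
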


Alternatively, we could have two maps $f\maps Q\ten M\to M\ten P$ and $g\maps K\ten
N\to N\ten L$.  

\begin{prop}\label{thm:double-superposing}
  In the above situation, we have
  $\tr\big(\fs(f\ten g)\fs\big) = \tr(f) \ten \tr(g)$.
\end{prop}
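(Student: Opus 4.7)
My plan is to derive this from Propositions~\ref{thm:tightening} (naturality) and~\ref{thm:vanishing-2} (the double-trace identity), both of which have already been established. The intuition becomes transparent in string-diagram notation: the diagram for $\tr(\fs(f\ten g)\fs)$ places the boxes $f$ and $g$ side by side with no wire running between them; the $M$-loop encircles $f$ alone and the $N$-loop encircles $g$ alone, so the picture decomposes into two topologically disjoint components that compute $\tr(f)$ and $\tr(g)$ respectively.

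To render this formally, first apply Proposition~\ref{thm:vanishing-2}, after inserting the symmetry isomorphisms needed to match the shape required there, to rewrite the trace over $M\ten N$ as the iterated trace: first over $M$, then over $N$. Inside the inner $M$-trace, the map $g$ acts only on the strands $K$, $N$, $L$, all disjoint from the $M$-strand. By the naturality of the trace in the ambient objects (Proposition~\ref{thm:tightening}, with $g$ providing the $Q'\to Q$ and $P\to P'$ pieces of that statement), the factor $g$ slides out of the inner trace, which thereby collapses to $\tr_M(f)\maps Q\to P$.

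The outer $N$-trace is now applied to the reassociation of $\tr_M(f)\ten g$. Since $\tr_M(f)$ involves only the strands $Q$ and $P$, which are disjoint from the $N$-loop, a second application of Proposition~\ref{thm:tightening} pulls $\tr_M(f)$ outside the $N$-trace. What remains inside is then exactly $\tr_N(g)$, so the final answer is $\tr_M(f)\ten\tr_N(g) = \tr(f)\ten\tr(g)$, as claimed.

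The main obstacle is purely bookkeeping: correctly arranging the symmetry isomorphisms $\fs$ at each stage so that Propositions~\ref{thm:vanishing-2} and~\ref{thm:tightening} apply literally. The Joyal--Street theorem on deformation-invariance of string diagrams, cited in Section~\ref{sec:fixed-points}, guarantees that these manipulations are legitimate, and indeed I would execute the entire argument pictorially, where the claim reduces to the evident topological fact that two disjoint loops in a string diagram may be evaluated independently.
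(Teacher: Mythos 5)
Your string-diagram picture is exactly the one the paper has in mind (the paper omits the proof as a ``straightforward diagram chase''), but the formal scaffolding you wrap around it --- that the identity follows from \autoref{thm:vanishing-2} together with \autoref{thm:tightening} alone --- has a gap at the step where you ``slide $g$ out of the inner trace.''  \autoref{thm:tightening} only permits pre- and post-composition with maps of the form $u\ten \id_M$ and $\id_M\ten v$, i.e.\ maps that leave the traced strand untouched \emph{and in its fixed position} (last in the source, first in the target).  After you set up the iterated trace, the map whose $M$-trace must be computed has source $(Q\ten K\ten N)\ten M$, with $M$ in the last slot, whereas in $f\ten g$ the object $M$ sits between $Q$ and $K$; the symmetry reconciling the two necessarily carries $M$ across the strands $K\ten N$ and so is not of the form $u\ten\id_M$ (ordering the factors as $g\ten f$ instead merely moves the same problem to the output side).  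Consequently naturality in $Q$ and $P$ cannot absorb it, and the inner trace does not ``collapse to $\tr(f)\maps Q\to P$'' --- note also that $g\maps K\ten N\to N\ten L$ does not even have the right type to serve as the $Q'\to Q$ or $P\to P'$ of \autoref{thm:tightening}.  What is actually needed is the statement that tensoring an idle strand alongside a morphism commutes with taking its trace, which is precisely \autoref{thm:superposing}; this is why the paper records that \autoref{thm:double-superposing} follows from \autoref{thm:superposing} together with \autoref{thm:vanishing-2}, not from \autoref{thm:tightening}.

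The gap is easily repaired in either of two ways.  Axiomatically: replace both invocations of \autoref{thm:tightening} by \autoref{thm:superposing} (first with $g$, viewed as a morphism from $K\ten N$ to $N\ten L$, superposed alongside the $M$-trace of $f$; then with $\tr(f)$ superposed alongside the $N$-trace of $g$), and your outline goes through.  Directly: verify the identity from \autoref{def:twisted-comm-trace}, using that $M\ten N$ has dual $\rdual{M}\ten\rdual{N}$ with coevaluation and evaluation given by $\eta_M\ten\eta_N$ and $\ep_M\ten\ep_N$ composed with symmetries; the resulting diagram for $\tr\big(\fs(f\ten g)\fs\big)$ is literally the disjoint union of the diagrams for $\tr(f)$ and $\tr(g)$, and the coherence theorem of Joyal and Street legitimizes evaluating the two components independently, exactly as you say.
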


Taking $N=I$ we obtain the following.

\begin{cor}\label{thm:superposing}
  If $M$ is dualizable and $f\maps Q\ten M\to M\ten P$ and $g\maps
  K\to L$ are maps, then $\tr(\fs(f\ten g)) = \tr(f) \ten g$.
\end{cor}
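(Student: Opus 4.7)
The plan is to derive this directly from Proposition~\ref{thm:double-superposing}, which is the ``binary'' version of the same compatibility with $\otimes$. To do this, I would instantiate that proposition with the dualizable object $N = I$, regarding $g\maps K\to L$ as a morphism $K\ten I \to I\ten L$ via the unit isomorphisms. The unit object is trivially self-dual, so the hypothesis of Proposition~\ref{thm:double-superposing} is satisfied.

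First, I would invoke Proposition~\ref{thm:vanishing-1} to identify $\tr(g) = g$ (modulo unit isomorphisms), which handles the right-hand side: Proposition~\ref{thm:double-superposing} yields
\[\tr\bigl(\fs(f\ten g)\fs\bigr) \;=\; \tr(f)\ten \tr(g) \;=\; \tr(f)\ten g.\]
Next, on the left-hand side, I need to verify that $\fs(f\ten g)\fs = \fs(f\ten g)$ (again, modulo unit isomorphisms) under the identification $N=I$. One of the two symmetry instances in $\fs(f\ten g)\fs$ is a swap involving the tensor factor $I$, and this is forced to equal the appropriate unit isomorphism by coherence; hence it can be absorbed into the identifications $K\ten I \cong K$ and $I\ten L\cong L$. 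The other $\fs$ is the genuine symmetry rearranging $K$ and $M$ (or $P$ and $L$, depending on which side one is working), and that is the $\fs$ appearing in the statement of the corollary.

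Putting these two observations together gives $\tr(\fs(f\ten g)) = \tr(f)\ten g$, which is the claim. The only delicate point is the bookkeeping of unit coherence isomorphisms, and this is handled by the strictification convention fixed at the start of \S\ref{sec:traces} (where we pretend \bC\ is strict by the coherence theorem); in a string-diagrammatic rendering there is literally nothing to check, since a strand labeled by $I$ is drawn as no strand at all. Thus the corollary is essentially a restatement of Proposition~\ref{thm:double-superposing} with one of its two traces made trivial by Proposition~\ref{thm:vanishing-1}.
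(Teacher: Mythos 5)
Your proposal is correct and matches the paper's own derivation: the paper obtains this corollary precisely by taking $N=I$ in Proposition~\ref{thm:double-superposing}, with $\tr(g)=g$ supplied by Proposition~\ref{thm:vanishing-1} and the extra symmetry absorbed into unit isomorphisms. You have simply spelled out the coherence bookkeeping that the paper leaves implicit.
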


On the other hand, it is not hard to show that
\autoref{thm:double-superposing} follows from
\autoref{thm:superposing} together with \autoref{thm:vanishing-2}.

Finally, if $M$ and $N$ are dualizable and we have maps $f\maps Q\ten
M\to M\ten P$ and $g\maps P\ten N \to N\ten K$, then we have the
composite
\[(\id_M\ten g)(f\ten \id_N)\maps Q\ten M\ten N \too M\ten N\ten K.\]
The next result then follows from \autoref{thm:tightening} and
\autoref{thm:vanishing-2}.

\begin{cor}
  In the above situation, we have
  \[\tr\big((\id_M\ten g)(f\ten \id_N)\big) = \tr(g)\circ \tr(f).\]
\end{cor}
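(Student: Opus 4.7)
The plan is to reduce the trace of the composite with respect to the compound dualizable object $M\ten N$ to an iterated trace --- first over $M$, then over $N$ --- and then to apply naturality to each stage.

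First, I would invoke \autoref{thm:vanishing-2} to rewrite
\[\tr\big((\id_M\ten g)(f\ten\id_N)\big) = \tr_N\!\Big(\tr_M\big((\id_M\ten g)(f\ten\id_N)\big)\Big),\]
where the outer trace on the left is with respect to $M\ten N$. This requires inserting appropriate symmetries $\fs$ to put the composite in the precise form demanded by \autoref{thm:vanishing-2}, whose hypothesis places $M$ and $N$ in a particular order on the domain versus the codomain.

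Next, I would compute the inner trace. The key observation is that both $g$ and $\id_N$ act entirely outside the $M$-slot being traced, so by \autoref{thm:tightening} (the naturality of $\tr$ in its $Q$ and $P$ variables) they may be pulled out of $\tr_M$. Two such applications yield
\[\tr_M\big((\id_M\ten g)(f\ten\id_N)\big) = g\circ\big(\tr(f)\ten\id_N\big),\]
which is a morphism $Q\ten N\to N\ten K$. Finally, taking $\tr_N$ of this morphism and applying \autoref{thm:tightening} once more, this time pulling the external morphism $\tr(f)\colon Q\to P$ out of the $N$-trace, gives
\[\tr_N\big(g\circ(\tr(f)\ten\id_N)\big) = \tr_N(g)\circ\tr(f) = \tr(g)\circ\tr(f),\]
which is the required equality.

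The principal obstacle I anticipate is the bookkeeping of the various symmetry isomorphisms $\fs$ needed to align our composite with the hypothesis of \autoref{thm:vanishing-2}: in our situation $M$ stands to the left of $N$ on both the domain and the codomain, whereas \autoref{thm:vanishing-2} is stated with the domain in the order $N\ten M$. In string-diagram notation these braidings are essentially invisible --- the argument is visually just ``the $M$-loop and the $N$-loop can be separated in time, and morphisms lying outside a loop may be freely slid around it'' --- but making this rigorous requires checking that the accumulated symmetries at the top and bottom of the diagram cancel. None of the other steps involves anything beyond two straightforward applications of naturality.
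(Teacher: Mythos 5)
Your argument is correct and is exactly the route the paper takes: the paper derives this corollary from \autoref{thm:vanishing-2} (splitting the trace over $M\ten N$ into an iterated trace, first over $M$ and then over $N$) together with \autoref{thm:tightening} (to extract $g$ from the inner trace and $\tr(f)$ from the outer one), which is precisely your three steps. The only quibble is that pulling the inert $\id_N$ through the $M$-trace is strictly an instance of \autoref{thm:superposing} rather than of naturality in $Q$ and $P$, but this is a triviality of the same symmetry-bookkeeping kind you already flag, and the paper elides it as well.
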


In particular, we can apply these results to untwisted traces.  Note
that by the Eckmann-Hilton argument, the two operations $\ten$ and
$\circ$ on $\bC(I,I)$ agree (up to unit isomorphisms) and make it a
commutative monoid.  We thereby obtain the following.

\begin{cor}
  If \bC\ is symmetric monoidal, $M$ and $N$ are dualizable, and
  $f\maps M\to M$ and $g\maps N\to N$ are endomorphisms, then
  \[\tr(f\ten g) = \tr(f)\ten \tr(g) = \tr(f)\circ\tr(g).\]
\end{cor}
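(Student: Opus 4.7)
The claim is exactly the kind of thing the author tells us to expect: it follows by combining Proposition \ref{thm:double-superposing} (or its consequence, the preceding corollary) with the Eckmann--Hilton argument referenced in the paragraph just before the statement. The plan is to prove the two equalities separately.

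\emph{First equality $\tr(f\ten g) = \tr(f)\ten \tr(g)$.} I would apply Proposition \ref{thm:double-superposing} in the degenerate case $Q = P = K = L = I$, which gives
\[ \tr\bigl(\fs(f\ten g)\fs\bigr) = \tr(f)\ten \tr(g). \]
Here the two instances of $\fs$ act only on copies of the unit object $I$, so by the coherence theorem (and Proposition \ref{thm:vanishing-1} applied to unit isomorphisms) they reduce to identities; hence $\tr(\fs(f\ten g)\fs) = \tr(f\ten g)$. This gives the left-hand equality.

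\emph{Second equality $\tr(f)\ten \tr(g) = \tr(f)\circ \tr(g)$.} This is exactly the Eckmann--Hilton observation made in the sentence preceding the corollary: the set $\bC(I,I)$ carries two monoid structures, $\ten$ and $\circ$, which share the same unit $\id_I$ and satisfy the interchange law
\[ (a\ten b)\circ (c\ten d) \;=\; (a\circ c)\ten (b\circ d), \]
and a standard calculation then forces $\ten = \circ$ on $\bC(I,I)$ and makes this operation commutative. Applying this identification to $a = \tr(f)$ and $b = \tr(g)$ yields the right-hand equality.

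\emph{Remark on an alternative route.} One can avoid invoking Proposition \ref{thm:double-superposing} directly by using the previous corollary instead: with $Q=P=K=L=I$ it states $\tr((\id_M\ten g)(f\ten \id_N)) = \tr(g)\circ \tr(f)$, and the interchange law for $\ten$ identifies $(\id_M\ten g)(f\ten \id_N)$ with $f\ten g$. Combined with Eckmann--Hilton this gives both equalities at once. There is no real obstacle in either route; the only point requiring any care is the appeal to coherence in Step 1 to dispose of the $\fs$'s whose only ``strings'' are copies of $I$.
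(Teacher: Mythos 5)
Your proof is correct and follows exactly the route the paper intends: the corollary is stated as an immediate consequence of specializing Proposition~\ref{thm:double-superposing} (or the composition corollary) to $Q=P=K=L=I$ and invoking the Eckmann--Hilton identification of $\ten$ and $\circ$ on $\bC(I,I)$, which is precisely what you do. Your care in noting that the symmetries $\fs$ degenerate to unit isomorphisms is a welcome (and correct) bit of extra rigor that the paper leaves implicit.
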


One final property of traces that should be mentioned here is the
following.

\begin{prop}\label{thm:yanking}
  If $M$ is dualizable, then the trace of $\id_{M\ten M}\maps M\ten
  M\to M\ten M$ is $\id_M\maps M\to M$.
\end{prop}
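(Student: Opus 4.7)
My plan is to unfold the trace directly from Definition~\ref{def:twisted-comm-trace} and then recognise the result as (a symmetric-monoidal-coherence variant of) the triangle identity. Specialising that definition to $Q=P=M$ and $f=\id_{M\ten M}$, the middle step $f\ten \id_{\rdual{M}}$ is itself the identity and the trace collapses to
\[\tr(\id_{M\ten M}) \;=\; (\ep \ten \id_M)\circ \fs \circ (\id_M \ten \eta),\]
where $\fs\maps M\ten M\ten\rdual{M}\to \rdual{M}\ten M\ten M$ is the symmetry that moves $\rdual{M}$ past the block $M\ten M$. So the entire task is to show this three-step composite is $\id_M$.

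The cleanest way to see this is pictorially. In Figure~\ref{fig:ut-smctrace} replace the $f$-vertex by the two parallel identity strands that make up $\id_{M\ten M}$. The resulting string diagram contains no vertices at all; it is just an $\eta$-cap joined to an $\ep$-cup by a single $M/\rdual{M}$ strand (the ``traced'' $M$ from $Q$ on the left meets the $\rdual{M}$ from the coevaluation, while the other $M$ from the coevaluation becomes $P$ on the right). Up to a topological deformation this is exactly the left-hand side of the first triangle identity in Figure~\ref{fig:triangle}, so the bent string straightens to a single vertical $M$-line, i.e.\ $\id_M$.

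At the formula level, I would factor $\fs$ as $(\fs_{M,\rdual{M}}\ten \id_M)\circ(\id_M\ten \fs_{M,\rdual{M}})$ and use naturality of the symmetry: the first factor converts $\id_M\ten\eta$ into $\eta\ten\id_M$, and the second converts $\ep\ten\id_M$ into $\id_M\ten\ep$. What remains is exactly
\[(\id_M \ten \ep)\circ(\eta\ten\id_M),\]
which is $\id_M$ by the first triangle identity from \autoref{fig:triangle}. I don't anticipate a real obstacle here; the only care required is bookkeeping which of the three strands is being swapped where, and this is handled automatically by the string-diagram presentation of the argument.
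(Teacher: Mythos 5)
Your string-diagram argument is correct, and it is exactly the proof the paper has in mind: this proposition is one of those the paper omits as a ``straightforward diagram chase,'' and with $f=\id_{M\ten M}$ the diagram has no vertices, the input strand is capped off by $\ep$ against the $\rdual{M}$ leg of the coevaluation, and after removing the two crossings (legitimate precisely because the category is symmetric) what remains is the first triangle identity of \autoref{fig:triangle}. Your identification of which strand plays which role is also right.

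One caveat about your formula-level elaboration. With the factorization $\fs_{M\ten M,\rdual{M}}=(\fs_{M,\rdual{M}}\ten\id_M)\circ(\id_M\ten\fs_{M,\rdual{M}})$ that you propose, the inner transposition simply composes with $\eta$ to give $\eta'=\fs\circ\eta\maps I\to\rdual{M}\ten M$, and the outer one composes with $\ep$ to give $\ep'=\ep\circ\fs\maps M\ten\rdual{M}\to I$; neither step is an instance of naturality, and what remains is $(\ep'\ten\id_M)\circ(\id_M\ten\eta')$, which is \emph{not} literally $(\id_M\ten\ep)\circ(\eta\ten\id_M)$. To run the naturality argument as you describe it, factor the symmetry instead as $\fs_{M,\rdual{M}\ten M}\circ\fs_{M,M\ten\rdual{M}}$, each factor moving a single strand past a block of two (this equals your composite by coherence, since it induces the same permutation). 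Then naturality of $\fs_{M,-}$ applied to $\eta\maps I\to M\ten\rdual{M}$ gives $\fs_{M,M\ten\rdual{M}}\circ(\id_M\ten\eta)=\eta\ten\id_M$, and applied to $\ep\maps\rdual{M}\ten M\to I$ gives $(\ep\ten\id_M)\circ\fs_{M,\rdual{M}\ten M}=\id_M\ten\ep$ (modulo unit isomorphisms), leaving exactly the triangle identity. Alternatively, keep your factorization and invoke the standard lemma that $(\fs\eta,\ep\fs)$ exhibit $\rdual{M}$ as a \emph{left} dual of $M$, so that $(\ep'\ten\id_M)\circ(\id_M\ten\eta')=\id_M$. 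Either way this is precisely the bookkeeping you anticipated, and the conclusion is unaffected.
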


In~\cite{jsv:traced-moncat} the above properties were taken to constitute
the following definition.

\begin{defn}\label{defn:traced-moncat}
  A symmetric monoidal category \bC\ is \textbf{traced} if it is
  equipped with functions 
  \begin{equation*}
    \tr\maps \bC(Q\ten M, M\ten P)\to \bC(Q,P)\label{eq:tw-trace-maps}
  \end{equation*}
  satisfying the conclusions of Propositions~\ref{thm:tightening},
  \ref{thm:sliding}, \ref{thm:vanishing-1}, \ref{thm:vanishing-2}, and
  \ref{thm:yanking} as well as \autoref{thm:superposing}.
\end{defn}

Actually,~\cite{jsv:traced-moncat} deals with the more general case of
a balanced monoidal category; we have simplified things by treating
only the symmetric case.
A similar set of axioms is considered in~\cite{maltsiniotis:traces}.

Evidently if \bC\ is compact closed (every object is
dualizable), then it is traced in a canonical (and, in fact, unique)
way.  Conversely, it is shown in~\cite{jsv:traced-moncat} that any
traced symmetric monoidal category can be embedded in a compact closed
one, in a way that identifies the given trace with the canonical trace.
On the other hand, much of the interest of the canonical symmetric
monoidal trace lies in its applicability to particular interesting
dualizable objects in categories where not every object is dualizable.

We end this section by making the connection to fixed-point operators  mentioned in \S\ref{sec:fixed-points} precise.
If $\bC$ is a cartesian monoidal category, a trace as in \autoref{defn:traced-moncat} defines  natural functions 
\[\fix\colon\bC(Q\times M, M)\to \bC(Q, M)\]
where for $f\colon Q\times M\to M$, $\fix(f)$ is defined by
\[\tr(Q\times M\xto{\id\times\Delta}Q\times M\times M
\xto{f\times \id}M\times M).\]
Then the above properties become the following.
\begin{itemize}
\item For $f\colon Q\times M \to M$,
  \[\fix(f)=f\circ (\id_Q\times \fix(f))\circ (\Delta_Q\times \id_M).\]
\item For $f\colon Q\times N\to M$ and $g\colon Q\times M\to N$,
  \[\fix(f\circ (\id_Q\times g)\circ (\Delta_Q\times \id_M))=f\circ (\id_Q\times \fix(f\circ (\id_Q\times f)\circ (\Delta_Q\times \id_M)))\circ (\Delta_Q\times \id_M).\]
\item For $f\colon Q\times Q\times M \to M$,
  \[\fix(f\circ (\id_Q\times \Delta_M))=\fix(\fix(f)).\]
\end{itemize}
These are the conditions that define a \textbf{fixed-point operator}.
Conversely, any fixed-point operator $\fix$ on a cartesian monoidal category defines a trace, where the trace of $f\colon Q\times M\to M\times P$ is
\[Q\xto{\fix(f\circ(\id_A\times \pi))}  P\times M\xto{\mathrm{pr}_1} P.
\]
\autoref{hhtrace} says that these two constructions are inverses: thus the existence of a fixed-point operator on a cartesian monoidal category is equivalent to that of a trace.

\section{Functoriality of traces}
\label{sec:funct-smc}

One of the main advantages of having an abstract formulation of trace
is that disparate notions of trace which all fall into the general
framework can be compared functorially.  In this section we summarize
the relevant results and their applicability in some examples,
including the Lefschetz fixed point theorem.

Recall that a \textbf{lax symmetric monoidal functor} $F\maps \bC\rightarrow \bD$
between symmetric monoidal categories consists of a functor $F$ and
natural transformations
\begin{align*}
  \fc\maps F(M)\ten F(N)&\too F(M\otimes N)\\
  \fii\maps I_{\bD}&\too F(I_{\bC})
\end{align*}
satisfying appropriate coherence axioms.  We say $F$ is
\textbf{normal} if \fii\ is an isomorphism, and \textbf{strong} if
\fc\ and \fii\ are both isomorphisms.

\begin{prop}\label{thm:funct-pres-dual}
  Let $F\maps \bC\rightarrow \bD$ be a normal lax symmetric monoidal
  functor, let $M\in \bC$ be dualizable with dual $\rdual{M}$, and
  assume that $\fc\maps F(M)\otimes F(\rdual{M})\rightarrow
  F(M\otimes \rdual{M})$ is an isomorphism (as it is when $F$ is
  strong).  Then $F(M)$ is dualizable with dual $F(\rdual{M})$.
\end{prop}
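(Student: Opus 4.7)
The plan is to construct explicit evaluation and coevaluation maps for $F(M)$ in $\bD$ out of the lax structure maps of $F$ and the images under $F$ of the evaluation $\ep$ and coevaluation $\eta$ witnessing that $\rdual{M}$ is dual to $M$ in $\bC$. The natural candidates are
\[
\eta_F \maps I_\bD \xto{\fii} F(I_\bC) \xto{F\eta} F(M\ten \rdual{M}) \xto{\fc^{-1}} F(M)\ten F(\rdual{M})
\]
and
\[
\ep_F \maps F(\rdual{M})\ten F(M) \xto{\fc} F(\rdual{M}\ten M) \xto{F\ep} F(I_\bC) \xto{\fii^{-1}} I_\bD,
\]
where $\fc^{-1}$ makes sense by the assumption that the particular instance $\fc\maps F(M)\ten F(\rdual{M}) \to F(M\ten \rdual{M})$ is invertible, and $\fii^{-1}$ makes sense because $F$ is normal.

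First I would verify that $\fc\maps F(\rdual{M})\ten F(M)\to F(\rdual{M}\ten M)$ is also invertible; this follows from the symmetry axiom for a lax symmetric monoidal functor, which says that $\fc$ intertwines the symmetry $\fs_\bC$ with $F(\fs_\bD)$, so invertibility at $(M,\rdual{M})$ transfers to invertibility at $(\rdual{M},M)$. Then I would check the two triangle identities. For the identity $(\id_{F(M)}\ten \ep_F)(\eta_F\ten \id_{F(M)}) = \id_{F(M)}$, I would expand both composites by substituting the definitions of $\eta_F$ and $\ep_F$, and then use naturality of $\fc$ together with the associativity and unitality axioms for a lax monoidal functor to rewrite the expression in the form $\fc^{-1} \circ F\big((\id_M\ten \ep)(\eta\ten \id_M)\big) \circ \fc$ modulo the unit maps; then the triangle identity in $\bC$ collapses this to an identity, and the remaining $\fc^{-1}\circ\fc$ and $\fii^{-1}\circ\fii$ cancel. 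The second triangle identity is entirely analogous.

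The main obstacle is the bookkeeping in the first step: one has to juggle several instances of $\fc$ at different triples of objects (for instance $(M,\rdual{M},M)$ appears via associativity, while $(I,M)$ and $(M,I)$ appear via unitality) and confirm that the lax monoidal coherence axioms convert the image $F\big((\id_M\ten\ep)(\eta\ten\id_M)\big)$ into the composite $(\id_{F(M)}\ten \ep_F)(\eta_F\ten \id_{F(M)})$. This is a routine diagram chase but is where all of the hypotheses (naturality of $\fc$, normality so that $\fii^{-1}$ exists, invertibility of $\fc$ at $(M,\rdual{M})$ and hence at $(\rdual{M},M)$, and associativity/unitality of the lax monoidal structure) must be used simultaneously. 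String diagrams in the style of \autoref{sec:traces} make the verification essentially visual and are how I would organize the argument in writing.
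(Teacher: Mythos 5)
Your proposal is correct and takes exactly the same approach as the paper, which defines the same candidate coevaluation $\fc^{-1}\circ F(\eta)\circ\fii$ and evaluation $\fii^{-1}\circ F(\ep)\circ\fc$ and leaves the triangle-identity verification (the coherence chase you sketch) implicit. The only superfluous step is your verification that $\fc$ is invertible at $(\rdual{M},M)$: the evaluation uses that instance of $\fc$ in the forward direction only, so its invertibility is never needed.
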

\begin{proof}
  Suppose given $M$ with dual $\rdual{M}$ exhibited by \eta\ and \ep.  Then
  the maps
  \[I_{\bD}\too[\fii] F(I_{\bC}) \xto{F(\eta)}
  F(M\ten \rdual{M})\xto{\fc^{-1}} F(M)\ten F(\rdual{M})
  \]
  and
  \[F(\rdual{M})\otimes F(M)\too[\fc]F(\rdual{M}\ten M)\xto{F(\ep)} F(I_{\bC})
  \xto{\fii^{-1}} I_{\bD}
  \]
  show that $F(M)$ is dualizable with dual $F(\rdual{M})$.
\end{proof}

In the above situation, we say that $F$ \textbf{preserves} the dual
$\rdual{M}$ of $M$.  Actually, a slightly weaker condition on $F$
suffices for the above conclusion; see~\cite{dp:frob-monoidal}.

\begin{prop}\label{thm:funct-pres-tr}
  If $F$ preserves the dual $\rdual{M}$ of $M$, and moreover
  $\fc\maps F(P) \ten F(M) \to F(P\ten M)$ is an isomorphism (as it is
  whenever $P=I$ and $F$ is normal), then for any map $f\maps Q\ten M
  \to M\ten P$, we have
  \[F(\tr(f)) = \tr\left(\fc \circ F(f)\circ \fc^{-1}\right).
  \]
  In particular, for an endomorphism $f\maps M\rightarrow M$, we have
  \[F(\tr(f))= \fii \circ \tr(F(f)) \circ \fii^{-1}\]
\end{prop}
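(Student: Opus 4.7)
The plan is to chase the definitions of the two sides and match them term-by-term, using the formulas from \autoref{thm:funct-pres-dual} for the evaluation and coevaluation on $F(M)$. Expand the right-hand side by unfolding \autoref{def:twisted-comm-trace} for $\fc\circ F(f)\circ \fc^{-1}\maps F(Q)\ten F(M) \to F(M)\ten F(P)$, using that the coevaluation for $F(M)$ is $\fc^{-1}\circ F(\eta)\circ \fii$ and the evaluation is $\fii^{-1}\circ F(\ep)\circ \fc$. Expand the left-hand side by applying $F$ to the composite defining $\tr(f)$. The goal is to show the resulting two strings of morphisms in $\bD$ coincide.

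The main tool is naturality of $\fc$, which lets one commute $\fc$ and $\fc^{-1}$ past $F$-images of morphisms, in the form $\fc \circ (F(g)\ten F(h)) = F(g\ten h)\circ \fc$. Combined with the coherence axiom $F(\fs_\bC)\circ \fc = \fc\circ \fs_\bD$ for the symmetry, and the hypothesis that the relevant instances of $\fc$ (for $P\ten M$, for $M\ten \rdual{M}$, etc.) are isomorphisms, one can progressively merge $\fc$'s and collapse the expression. The $\fii$ and $\fii^{-1}$ appearing in $\eta_{F(M)}$ and $\ep_{F(M)}$ cancel with instances produced by the various unit isomorphisms, and the remaining $\fc$ and $\fc^{-1}$ cancel pairwise on the intermediate tensor factors, leaving precisely $F$ applied to the composite $(\ep\ten\id_P)\circ\fs\circ (f\ten\id)\circ(\id_Q\ten\eta)$, which is $F(\tr(f))$.

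For the second statement, specialize $Q=P=I_\bC$ so that $f\maps M\to M$ is an endomorphism. Here $\tr(f)\maps I_\bC\to I_\bC$ and $\tr(F(f))\maps I_\bD\to I_\bD$. The isomorphism $\fc\maps F(I_\bC)\ten F(M)\to F(I_\bC\ten M)= F(M)$, modulo unit isomorphisms in $\bD$, is identified with $\fii\ten\id_{F(M)}$ composed with the unit isomorphism. Substituting into the formula $F(\tr(f)) = \tr(\fc\circ F(f)\circ \fc^{-1})$ and simplifying using naturality of the unit isomorphism in $\bD$ (which moves $\fii$ out of the trace by \autoref{thm:tightening}) produces $F(\tr(f)) = \fii \circ \tr(F(f)) \circ \fii^{-1}$.

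The main obstacle is the bookkeeping of the many nested instances of $\fc$: because the trace involves triple tensor products $Q\ten M\ten \rdual{M}$ and $\rdual{M}\ten M\ten P$, one has to apply iterated $\fc$'s (obtained from coherence) and verify that each one cancels against an inverse at the appropriate stage. This is a routine but tedious diagram chase, and is far more transparent in string diagram notation, where it simply amounts to drawing the picture in $\bD$ and observing that the $\fc$-boxes enclose a single $F$-image of the picture in $\bC$.
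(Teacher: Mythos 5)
Your proposal is correct and follows essentially the same route as the paper, whose entire proof is the instruction to evaluate the right-hand side using the dual $F(\rdual{M})$ of $F(M)$ constructed in \autoref{thm:funct-pres-dual}; you have simply spelled out the resulting diagram chase (naturality of $\fc$, the symmetry and associativity coherence axioms, and cancellation of the invertible constraints) that the paper leaves implicit.
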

\begin{proof}
  Use the dual $F(\rdual{M})$ of $F(M)$ to evaluate the right hand
  side.
\end{proof}

\begin{eg}\label{eg:ext_scalar}
If $R$ and $S$ are commutative rings and $\phi\colon R\rightarrow
S$ is a ring homomorphism, then extension of scalars defines a
strong symmetric monoidal functor $(-\otimes_R S)$ from $R$-modules to $S$-modules.
If $M$ is a dualizable $R$-module and $f\colon Q\otimes M
\rightarrow M\otimes P$ is a map of $R$-modules, 
\autoref{thm:funct-pres-tr} implies $\tr(f\otimes_R S)=\tr(f)\otimes_R S$.
If $Q$ and $P$ are both the ring $R$, then as usual, we can think of the traces
of $f$ and $f\otimes_R S$ as elements of $R$ and $S$, respectively; in this 
case, we have $\tr(f\otimes_R S) = \tr(f)\otimes_R S = \phi (\tr(f))$.
\end{eg}

\begin{eg}\label{eg:homology-monoidal}
  Homology is a normal lax symmetric monoidal functor from $\bCh{R}$ 
  or $\Ho(\bCh{R})$ to the
  category $\bGrMod{R}$ of graded $R$-modules.  The K\"unneth theorem
  implies that the natural transformation
  \[\fc\maps H(M_*)\otimes H(N_*)\to H(M_*\otimes N_*)\] 
  is an isomorphism if $M_p$ and $H_p(M_*)$ are projective for each
  $p$.  When these conditions are satisfied (such as when the ground
  ring $R$ is a field),  \autoref{thm:funct-pres-tr}
  implies \[\tr(H_*(f))= H_*(\tr(f))\] for any map of chain complexes
  $f\maps M_*\rightarrow M_*$.  In other words, the Lefschetz number
  is the same whether it is calculated at the level of chain
  complexes or homology.
\end{eg}

\begin{eg}\label{eg:lefschetz-fp-thm}
  By composing the rational cellular chain complex functor with a
  functorial CW approximation, we obtain a functor $\Top\to
  \bCh{\mathbb{Q}}$.  In fact, this functor factors through $\Sp$ via
  a similar construction of CW spectra, and we have an
  induced functor $\Ho(\Sp)\to\Ho(\bCh{\mathbb{Q}})$ which
  is strong symmetric monoidal.  It follows that the fixed
  point index of a continuous map is equal to the Lefschetz number of the induced map on
  chain complexes.  Combining this with the previous example, and
  using rational coefficients so the K\"unneth theorem holds, we
  obtain the \emph{Lefschetz fixed point theorem}: if $f\maps M\to M$
  is a continuous map, where $\Sip (M)$ is dualizable, and
  $\tr H_*(f,\mathbb{Q}) \neq 0$, then $\tr(f)\neq 0$, and thus $f$ has a
  fixed point.  This example was one of the original motivations for
  the abstract study of traces in~\cite{dp:duality}.
\end{eg}

\begin{eg}
  Generalizing the previous example, if $\Sip (M)$ is
  dualizable in $\Ho(\Sp)$ and $f\colon Q\times M\rightarrow M$ is a continuous map,
  then the trace of $\Sip(f)$ in $\Ho(\Sp)$ is a morphism
  $\Sip(Q)\rightarrow S$ in $\Ho(\Sp)$.  We can then
  take the rational homology of this map to obtain a map $\tr(H_*(f_+))\maps
  H_*(Q_+)\to \bbZ$.  On the other hand, we can also apply rational
  homology before taking the trace; this way we obtain a map
  \[H_*(f_+)\colon H_*(Q_+)\otimes H_*(M_+)\rightarrow H_*(M_+)\]
  whose trace is a map $H_*(Q_+)\rightarrow \bbZ$.
  \autoref{thm:funct-pres-tr} then shows
  \[H_*(\tr f_+))=\tr(H_*(f_+)).\]

  When $Q$ is a point, the set of morphisms $\Sip(Q)\to S$ in $\Ho(\Sp)$
   and the set of
  morphisms $H_0(Q_+)\to \bbZ$ in $\Ab$ can both be identified with \bbZ, so no
  information about traces is lost by passage to rational homology.
  For general $Q$, information is lost, but this is not necessarily a
  bad thing: the set of maps $\Sip(Q)\to S$ can be difficult to
  calculate, while the set of maps $H_0(Q_+)\rightarrow \bbZ$ will
  usually be much easier to describe.
\end{eg}

\begin{eg}\label{eg:tft}
  An \emph{$n$-dimensional topological field
    theory}~\cite{atiyah:tqft} with values in a symmetric monoidal
  category \bC\ (such as $\Vect{k}$) is a strong symmetric
  monoidal functor $Z\maps \bCob{n}\to \bC$.  Since every object
  $M$ of $\bCob{n}$ is dualizable, so is each object $Z(M)$.
  Thus, the trace of a cobordism $B$ from $M$ to itself is mapped to
  an endomorphism of the unit of \bC, which can be regarded as an
  algebraic invariant of $B$ computed by the field theory $Z$.

  If $n=1$, then $\bCob{1}$ is the free symmetric monoidal
  category on a dualizable object; thus a 1-dimensional TFT is just a
  dualizable object.  Likewise, if $n=2$, then $\bCob{2}$ is the
  free symmetric monoidal category on a Frobenius algebra; see, for
  instance,~\cite{jk:frob-2dtqft}.  For a higher-dimensional
  generalization, see~\cite{lurie:tft}.
\end{eg}

Finally, since monoidal categories form not just a category but a
2-category, it is natural to ask also how traces interact with monoidal
natural transformations.
Recall that if $F,G\maps \bC\to\bD$ are lax symmetric monoidal functors, a
\textbf{monoidal natural transformation} is a natural transformation
$\al\maps F\to G$ which is compatible with the monoidal constraints of
$F$ and $G$ in an evident way.

\begin{prop}\label{thm:trans-pres-tr}
  Let $F,G\maps \bC\to\bD$ be normal lax symmetric monoidal functors,
  let $\al\maps F\to G$ be a monoidal natural transformation, let $M$ be
  dualizable in \bC, and assume that $F$ and $G$ preserve its dual
  $\rdual{M}$.  Then $\al_M\maps F(M)\to G(M)$ is an isomorphism, and for
  any $f\maps Q\ten M \to M\ten P$, the square
  \[\xymatrix@C=3pc{F(Q) \ar[rr]^{\tr\left(\fc \circ F(f)\circ \fc^{-1}\right)}
    \ar[d]_{\al_Q} && F(P) \ar[d]^{\al_P}\\
    G(Q) \ar[rr]_{\tr\left(\fc \circ G(f)\circ \fc^{-1}\right)} && G(P)}
  \]
  commutes.  In particular, for an endomorphism $f\maps M\to M$, we
  have
  \[\tr(F(f)) = \tr(G(f)).\]
\end{prop}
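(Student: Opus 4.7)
The plan is to deduce both conclusions---invertibility of $\al_M$ and commutativity of the square---directly from the two monoidality conditions on $\al$ (compatibility with $\fc$ and with $\fii$), using the transported unit and counit for $F(M)$ and $G(M)$ constructed in the proof of \autoref{thm:funct-pres-dual}. Write $\eta_{F(M)}, \ep_{F(M)}$ and $\eta_{G(M)}, \ep_{G(M)}$ for these transported maps. The key preliminary observation is that the monoidality of $\al$ at the pair $(M,\rdual M)$ and at the unit, combined with naturality of $\al$ applied to the morphisms $\eta\maps I_\bC\to M\ten \rdual M$ and $\ep\maps \rdual M\ten M\to I_\bC$, yields the two intertwining identities
\begin{align*}
  (\al_M\ten \al_{\rdual M})\circ \eta_{F(M)} &= \eta_{G(M)},\\
  \ep_{G(M)}\circ (\al_{\rdual M}\ten \al_M) &= \ep_{F(M)}.
\end{align*}

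For invertibility of $\al_M$, following the standard calculus of mates I would define a candidate inverse $\beta\maps G(M)\to F(M)$ as the mate of $\al_{\rdual M}\maps F(\rdual M)\to G(\rdual M)$ taken with respect to the two dualities $(F(M),F(\rdual M))$ and $(G(M),G(\rdual M))$. Substituting the intertwining identities above into the two composites $\beta\circ \al_M$ and $\al_M\circ \beta$ and then collapsing by the triangle identities for the $F$- and $G$-dualities yields $\id_{F(M)}$ and $\id_{G(M)}$ respectively.

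For the square, I would expand each trace according to the transported duality data: $\tr(\fc\circ F(f)\circ \fc^{-1})$ is the composite $F(Q)\to F(Q)\ten F(M)\ten F(\rdual M)\to F(M)\ten F(P)\ten F(\rdual M)\xrightarrow{\fs} F(\rdual M)\ten F(M)\ten F(P)\to F(P)$, and similarly for $G$. To compare them I would post-compose with $\al_P$ and slide $\al$ leftward through the composite one stage at a time: the two intertwining identities handle the bookkeeping at $\eta$ and $\ep$; naturality of the symmetry $\fs$ handles the middle swap; and naturality of $\al$ on $f$, combined again with $\fc$-compatibility, handles the $\tilde f_F$ step via the identity $(\al_M\ten \al_P)\circ (\fc\circ F(f)\circ \fc^{-1}) = (\fc\circ G(f)\circ \fc^{-1})\circ (\al_Q\ten \al_M)$. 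The result is $\tr(\fc\circ G(f)\circ \fc^{-1})\circ \al_Q$, which is the desired square.

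For the last assertion, specializing to $Q=P=I$ makes $\al_I = \fii^G\circ (\fii^F)^{-1}$ an isomorphism by normality, and the square reduces (after applying \autoref{thm:funct-pres-tr} to both sides to pass from $\tr(\fc\circ F(f)\circ\fc^{-1})$ to $\tr(F(f))$, and likewise for $G$) to the equality $\tr(F(f)) = \tr(G(f))$. The main obstacle is not conceptual but purely notational: keeping track of the coherence and symmetry isomorphisms while sliding $\al$ through the trace composite. In string diagram notation this is essentially transparent---the $\al$-bead may be moved freely along each string---but in formulas it demands a careful diagram chase.
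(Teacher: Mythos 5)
Your proposal is correct and follows essentially the same route as the paper: the inverse of $\al_M$ is obtained as the mate of $\al_{\rdual M}$ (the paper outsources this diagram chase to \cite{dp:frob-monoidal}, while you carry it out via the two intertwining identities for the transported $\eta$ and $\ep$), and the trace comparison then follows by moving $\al$ through the trace composite. The only cosmetic difference is that the paper deduces the comparison by citing cyclicity/sliding applied to $G(f)=\al_M\circ F(f)\circ \al_M^{-1}$, whereas you re-derive that sliding step by hand inside the expanded composite.
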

\begin{proof}
  Since $F(M)$ and $G(M)$ have duals $F(\rdual{M})$ and $G(\rdual{M})$
  respectively, the morphism $\al_{\rdual{M}}\maps F(\rdual{M}) \to
  G(\rdual{M})$ has a dual $\rdual{(\al_{\rdual{M}})}\maps G(M) \to F(M)$.
  A diagram chase (see~\cite[Prop.~6]{dp:frob-monoidal}) shows that
  this is an inverse to $\al_M$.  Then since $G(f) = \al_M \circ
  (F(f))\circ (\al_M)^{-1}$, cyclicity implies that $\tr(F(f)) = \tr(G(f))$.
\end{proof}

\begin{rmk}
  In particular, if \bC\ is
  compact closed and $F,G\maps \bC\to\bD$ are
  strong monoidal, then any monoidal transformation $F\to G$ is an
  isomorphism.  Thus, when we say
  $\bCob{1}$ is the free symmetric monoidal category on a
  dualizable object, as in \autoref{eg:tft}, we really mean that the
  category of strong monoidal functors $\bCob{1}\to \bD$ is
  equivalent to the \emph{groupoid} of dualizable objects in \bD\ and
  isomorphisms between them.  This also generalizes to higher
  dimensions.
\end{rmk}

\autoref{thm:trans-pres-tr} implies some useful ``comparisons between
comparisons'' of ways to compute traces.

\begin{eg}\label{eg:integralhomologytransformation}
  As in \autoref{eg:homology-monoidal}, since $\mathbb{Q}$ is a field,
  the K\"{u}nneth theorem implies that the functor $H_*(-;\mathbb{Q})$
  from $\Ho(\Sp)$ to the category $\GrVect{\mathbb{Q}}$ of graded
  $\mathbb{Q}$-vector spaces is strong symmetric monoidal.  While
  integral homology $H_*(-;\mathbb{Z})$ is not strong symmetric
  monoidal, the K\"unneth theorem implies that it becomes so if
  we quotient by torsion; thus $H_*(-;\mathbb{Z})/\mathrm{Torsion}$ is
  a strong symmetric monoidal functor from $\Ho(\Sp)$ to
  $\bGrMod{\mathbb{Z}}$.  Hence we can compute Lefschetz numbers using
  integral homology as well, and it is natural to want to compare the
  two results.

  As in \autoref{eg:ext_scalar},
  extension of scalars along the inclusion $\iota\colon 
  \mathbb{Z}\rightarrow \mathbb{Q}$ defines a strong symmetric monoidal functor 
  from $\bGrMod{\mathbb{Z}}$ to $\GrVect{\mathbb{Q}}$.  Thus
  we have two functors
  \[H_*(-;\mathbb{Q}) \qquad\text{and}\qquad
  \big(H_*(-;\mathbb{Z})/\mathrm{Torsion}\big)\otimes \mathbb{Q}\]
  from $\Ho(\Sp)$ to $\GrVect{\mathbb{Q}}$, and
  the same inclusion also defines a natural transformation 
  \[\alpha\colon \big(H_*(-;\mathbb{Z})/\mathrm{Torsion}\big)\otimes \mathbb{Q}
  \longrightarrow H_*(-;\mathbb{Q}).\]
  Therefore, we can combine Propositions \ref{thm:funct-pres-tr} and
  \ref{thm:trans-pres-tr} to compare the Lefschetz numbers
  computed using $H_*(-;\mathbb{Z})$ and $H_*(-;\mathbb{Q})$.

  Explicitly, suppose $\Sip(M)$ is dualizable and $f\maps M\to M$ is an endomorphism 
  in $\Ho(\Sp)$.  Then \autoref{thm:trans-pres-tr} implies that, first
  of all, $\alpha$ is an isomorphism
  \[\big(H_*(M;\mathbb{Z})/\mathrm{Torsion}\big)\otimes \mathbb{Q}
  \iso H_*(M;\mathbb{Q}),\]
  and secondly, the trace of $\big(H_*(f;\mathbb{Z})/\mathrm{Torsion})\otimes \mathbb{Q}$ 
  is the same as the trace of $H_*(f;\mathbb{Q})$.
  Since this trace is not twisted, the observation at the end of \autoref{eg:ext_scalar}
  implies
  \[\tr\Big(\big(H_*(f;\mathbb{Z})/\mathrm{Torsion}\big)\otimes \mathbb{Q}\Big)=
  \iota\Big(\tr\big(H_*(f;\mathbb{Z})/\mathrm{Torsion}\big)\Big);\]
  thus the Lefschetz number of $f$ computed using 
  $H_*(-;\mathbb{Z})/\mathrm{Torsion}$ is the same as the Lefschetz number
  computed using $H_*(-;\mathbb{Q})$.
\end{eg}

\section{Vistas}
\label{sec:vistas}

The symmetric monoidal trace described in this paper can
be generalized in various directions.  We have already mentioned its
generalizations to \emph{balanced monoidal categories} (at the end of
\S\ref{sec:examples}) and to \emph{traced monoidal categories}
(\autoref{defn:traced-moncat}).  There are also straightforward
generalizations to symmetric monoidal 2-categories and symmetric
monoidal $n$-categories (modulo a definition of the latter).

Categorifying in a different direction, in~\cite{kate:traces} the
first author introduced a general notion of trace for
\emph{bicategories}, regarded as ``monoidal categories with many
objects''.  This type of trace applies to \emph{noncommutative}
situations such as modules over a noncommutative ring, and was motivated by 
applications to refinements of the Lefschetz fixed point theorem which
use versions of the \emph{Reidemeister trace}.  

This area of topological fixed point theory makes extensive use of comparison results 
like those described in Examples \ref{eg:lefschetz-fp-thm} and \ref{eg:integralhomologytransformation}.  Examples of the classical,
equivariant, and fiberwise results can be found in \cite{W:equiv, N:trace, LR:equiv, H:generalized, GN:trace}
and \cite{b:lefschetz, j:nielsen } contain expository discussion of the classical case.  These results are studied from a 
categorical perspective in \cite{kate:traces, kate:rel, equiv}.

Bicategorical traces
are studied further in~\cite{PS2}, including a suitable notion of
string diagram.
Finally, \cite{PS3} deals with an abstract context
that gives rise to both bicategories and symmetric monoidal categories
(including parametrized spectra as a prime example), and the
relationships of the traces therein.

\bibliographystyle{plain.bst}
\bibliography{traces_2}

\begin{thebibliography}{10}

\bibitem{atiyah:thom}
M.~F. Atiyah.
\newblock Thom complexes.
\newblock {\em Proc. London Math. Soc. (3)}, 11:291--310, 1961.

\bibitem{atiyah:tqft}
M.~F.\ Atiyah.
\newblock Topological quantum field theories.
\newblock {\em Publications Mathematiques de l'IHES}, 8:175--186, 1988.

\bibitem{becker}
J.~C. Becker and D.~H. Gottlieb.
\newblock The transfer map and fiber bundles.
\newblock {\em Topology}, 14:1--12, 1975.

\bibitem{bh:traced-premon}
Nick Benton and Martin Hyland.
\newblock Traced premonoidal categories, 1999.

\bibitem{b:lefschetz}
Robert~F. Brown.
\newblock {\em The {L}efschetz fixed point theorem}.
\newblock Scott, Foresman and Co., Glenview, Ill.-London, 1971.

\bibitem{dp:frob-monoidal}
Brian Day and Craig Pastro.
\newblock Note on {F}robenius monoidal functors.
\newblock {\em New York J. Math.}, 14:733--742, 2008.

\bibitem{d:book}
A.~Dold.
\newblock {\em Lectures on algebraic topology}.
\newblock Springer-Verlag, New York, 1972.
\newblock Die Grundlehren der mathematischen Wissenschaften, Band 200.

\bibitem{d:index_enr}
Albrecht Dold.
\newblock Fixed point index and fixed point theorem for {E}uclidean
  neighborhood retracts.
\newblock {\em Topology}, 4:1--8, 1965.

\bibitem{d:index}
Albrecht Dold.
\newblock The fixed point index of fibre-preserving maps.
\newblock {\em Invent. Math.}, 25:281--297, 1974.

\bibitem{d:transfer}
Albrecht Dold.
\newblock The fixed point transfer of fibre-preserving maps.
\newblock {\em Math. Z.}, 148(3):215--244, 1976.

\bibitem{dp:duality}
Albrecht Dold and Dieter Puppe.
\newblock Duality, trace, and transfer.
\newblock In {\em Proceedings of the International Conference on Geometric
  Topology (Warsaw, 1978)}, pages 81--102, Warsaw, 1980. PWN.

\bibitem{ek:gen-funct-calc}
Samuel Eilenberg and G.~M. Kelly.
\newblock A generalization of the functorial calculus.
\newblock {\em J. Algebra}, 3:366--375, 1966.

\bibitem{ekmm}
A.~D. Elmendorf, I.~Kriz, M.~A. Mandell, and J.~P. May.
\newblock {\em Rings, Modules, and Algebras in Stable Homotopy Theory},
  volume~47 of {\em Mathematical Surveys and Monographs}.
\newblock American Mathematical Society, 1997.
\newblock With an appendix by M. Cole.

\bibitem{fy:brd-cpt}
Peter~J. Freyd and David~N. Yetter.
\newblock Braided compact closed categories with applications to
  low-dimensional topology.
\newblock {\em Adv. Math.}, 77(2):156--182, 1989.

\bibitem{gn:higher}
Ross Geoghegan and Andrew Nicas.
\newblock Higher {E}uler characteristics. {I}.
\newblock {\em Enseign. Math. (2)}, 41(1-2):3--62, 1995.

\bibitem{GN:trace}
Ross Geoghegan and Andrew Nicas.
\newblock Fixed point theory and the {$K$}-theoretic trace.
\newblock In {\em Nielsen theory and {R}eidemeister torsion ({W}arsaw, 1996)},
  volume~49 of {\em Banach Center Publ.}, pages 137--149. Polish Acad. Sci.,
  Warsaw, 1999.

\bibitem{hasegawa}
Masahito Hasegawa.
\newblock The uniformity principle on traced monoidal categories.
\newblock {\em Publ. Res. Inst. Math. Sci.}, 40(3):991--1014, 2004.

\bibitem{hovey:modelcats}
Mark Hovey.
\newblock {\em Model Categories}, volume~63 of {\em Mathematical Surveys and
  Monographs}.
\newblock American Mathematical Society, 1999.

\bibitem{H:generalized}
S.~Y. Husseini.
\newblock Generalized {L}efschetz numbers.
\newblock {\em Trans. Amer. Math. Soc.}, 272(1):247--274, 1982.

\bibitem{j:nielsen}
Bo~Ju Jiang.
\newblock {\em Lectures on {N}ielsen fixed point theory}, volume~14 of {\em
  Contemporary Mathematics}.
\newblock American Mathematical Society, Providence, R.I., 1983.

\bibitem{ptj:elephant2}
Peter~T. Johnstone.
\newblock {\em Sketches of an Elephant: A Topos Theory Compendium: Volume 2}.
\newblock Number~43 in Oxford Logic Guides. Oxford Science Publications, 2002.

\bibitem{js:geom-tenscalc-i}
Andr{\'e} Joyal and Ross Street.
\newblock The geometry of tensor calculus. {I}.
\newblock {\em Adv. Math.}, 88(1):55--112, 1991.

\bibitem{js:brd-tensor}
Andr{\'e} Joyal and Ross Street.
\newblock Braided tensor categories.
\newblock {\em Adv. Math.}, 102(1):20--78, 1993.

\bibitem{jsv:traced-moncat}
Andr{\'e} Joyal, Ross Street, and Dominic Verity.
\newblock Traced monoidal categories.
\newblock {\em Math. Proc. Cambridge Philos. Soc.}, 119(3):447--468, 1996.

\bibitem{jt:galois}
Andr{\'e} Joyal and Myles Tierney.
\newblock An extension of the {G}alois theory of {G}rothendieck.
\newblock {\em Mem. Amer. Math. Soc.}, 51(309):vii+71, 1984.

\bibitem{kl:cpt}
G.~M. Kelly and M.~L. Laplaza.
\newblock Coherence for compact closed categories.
\newblock {\em J. Pure Appl. Algebra}, 19:193--213, 1980.

\bibitem{jk:frob-2dtqft}
Joachim Kock.
\newblock {\em Frobenius algebras and 2{D} topological quantum field theories},
  volume~59 of {\em London Mathematical Society Student Texts}.
\newblock Cambridge University Press, Cambridge, 2004.

\bibitem{lms:equivariant}
L.~G. Lewis, Jr., J.~P. May, M.~Steinberger, and J.~E. McClure.
\newblock {\em Equivariant stable homotopy theory}, volume 1213 of {\em Lecture
  Notes in Mathematics}.
\newblock Springer-Verlag, Berlin, 1986.
\newblock With contributions by J. E. McClure.

\bibitem{LR:equiv}
Wolfgang L{\"u}ck and Jonathan Rosenberg.
\newblock The equivariant {L}efschetz fixed point theorem for proper cocompact
  {$G$}-manifolds.
\newblock In {\em High-dimensional manifold topology}, pages 322--361. World
  Sci. Publ., River Edge, NJ, 2003.

\bibitem{lurie:tft}
Jacob Lurie.
\newblock On the classification of topological field theories.
\newblock arXiv:0905.0465.

\bibitem{maltsiniotis:traces}
G.~Maltsiniotis.
\newblock Traces dans les cat\'egories mono\"\i dales, dualit\'e et
  cat\'egories mono\"\i dales fibr\'ees.
\newblock {\em Cahiers Topologie G\'eom. Diff\'erentielle Cat\'eg.},
  36(3):195--288, 1995.

\bibitem{mm:eqosp-smod}
M.~A. Mandell and J.~P. May.
\newblock Equivariant orthogonal spectra and {$S$}-modules.
\newblock {\em Mem. Amer. Math. Soc.}, 159(755):x+108, 2002.

\bibitem{mmss}
M.~A. Mandell, J.~P. May, S.~Schwede, and B.~Shipley.
\newblock Model categories of diagram spectra.
\newblock {\em Proc. London Math. Soc. (3)}, 82(2):441--512, 2001.

\bibitem{may:traces}
J.~P. May.
\newblock The additivity of traces in triangulated categories.
\newblock {\em Adv. Math.}, 163(1):34--73, 2001.

\bibitem{maysig:pht}
J.~P. May and J.~Sigurdsson.
\newblock {\em Parametrized homotopy theory}, volume 132 of {\em Mathematical
  Surveys and Monographs}.
\newblock American Mathematical Society, Providence, RI, 2006.

\bibitem{N:trace}
Andrew Nicas.
\newblock Trace and duality in symmetric monoidal categories.
\newblock {\em $K$-Theory}, 35(3-4):273--339 (2006), 2005.

\bibitem{penrose:negdimten}
Roger Penrose.
\newblock Applications of negative dimensional tensors.
\newblock In {\em Combinatorial {M}athematics and its {A}pplications ({P}roc.
  {C}onf., {O}xford, 1969)}, pages 221--244. Academic Press, London, 1971.

\bibitem{equiv}
Kate Ponto.
\newblock Equivariant fixed point theory.
\newblock arXiv:0910.1274.

\bibitem{kate:traces}
Kate Ponto.
\newblock Fixed point theory and trace for bicategories.
\newblock {\em Ast\'erisque}, (333):xii+102, 2010.
\newblock arXiv:0807.1471.

\bibitem{kate:rel}
Kate Ponto.
\newblock Relative fixed point theory.
\newblock {\em Algebr. Geom. Topol.}, 11(2):839--886, 2011.

\bibitem{PS4}
Kate Ponto and Michael Shulman.
\newblock The multiplicativity of fixed point invariants.
\newblock arXiv:1203.0950.

\bibitem{PS2}
Kate Ponto and Michael Shulman.
\newblock Shadows and traces for bicategories.
\newblock {\em Journal of Homotopy and Related Sturctures}.
\newblock 10.1007/s40062-012-0017-0. arXiv:0910.1306.

\bibitem{PS3}
Kate Ponto and Michael Shulman.
\newblock Duality and trace in indexed monoidal categories.
\newblock {\em Theory and Applications of Categories}, 26(23):582--659, 2012.
\newblock arXiv:1211.1555.

\bibitem{rr:shvs-modules}
Pedro Resende and Elias Rodrigues.
\newblock Sheaves as modules.
\newblock {\em Appl. Categ. Structures}, 18(2):199--217, 2010.

\bibitem{selinger:graphical}
Peter Selinger.
\newblock A survey of graphical languages for monoidal categories.
\newblock In Bob Coeke, editor, {\em New Structures for Physics}, chapter~4.
  Springer, 2011.
\newblock Available at
  \url{http://www.mscs.dal.ca/~selinger/papers.html\#graphical} and
  arXiv:0908.3347.

\bibitem{td:groups}
Tammo tom Dieck.
\newblock {\em Transformation groups}, volume~8 of {\em de Gruyter Studies in
  Mathematics}.
\newblock Walter de Gruyter \& Co., Berlin, 1987.

\bibitem{ulrich}
Hanno Ulrich.
\newblock {\em Fixed point theory of parametrized equivariant maps}, volume
  1343 of {\em Lecture Notes in Mathematics}.
\newblock Springer-Verlag, Berlin, 1988.

\bibitem{W:equiv}
Julia Weber.
\newblock The universal functorial equivariant {L}efschetz invariant.
\newblock {\em $K$-Theory}, 36(1-2):169--207 (2006), 2005.

\end{thebibliography}

\end{document}